\newcommand{\B}{\ensuremath{\mathrm{B}}}
\newcommand{\C}{\ensuremath{\mathbb{C}}}
\newcommand{\D}{\mathrm{D}}
\newcommand{\E}{\ensuremath{\mathbb{E}}}
\let\H\relax 
\newcommand{\H}{\mathrm{H}} 
\let\L\relax 
\newcommand{\L}{\mathrm{L}}
\newcommand{\R}{\ensuremath{\mathbb{R}}}
\newcommand{\W}{\mathrm{W}}
\newcommand{\e}{\mathrm{e}}
\renewcommand{\d}{\mathop{}\mathopen{}\mathrm{d}} 
\let\i\relax 
\newcommand{\i}{\mathrm{i}}
\newcommand{\Id}{\mathrm{Id}} 
\newcommand{\VN}{\mathrm{VN}} 
\renewcommand{\leq}{\ensuremath{\leqslant}}
\renewcommand{\geq}{\ensuremath{\geqslant}}
\newcommand{\qed}{\hfill \vrule height6pt  width6pt depth0pt}
\newcommand{\norm}[1]{ \| #1  \|}
\newcommand{\bnorm}[1]{ \big\| #1  \big\|}
\newcommand{\Bnorm}[1]{ \Big\| #1  \Big\|}
\newcommand{\xra}{\xrightarrow} 
\newcommand{\co}{\colon}
\newcommand{\ot}{\otimes}
\newcommand{\scr}{\mathscr} 
\newcommand{\ov}{\overset}
\DeclareMathOperator{\Aut}{Aut} 
\DeclareMathOperator{\dom}{dom}
\newtheorem{thm}{Theorem}[section]
\newtheorem{prop}[thm]{Proposition}
\newtheorem{lemma}[thm]{Lemma}
\newtheorem{remark}[thm]{Remark}
\newenvironment{proof}[1][]{\noindent {\it Proof #1} : }{\hbox{~}\qed
\smallskip
}
\let\OLDthebibliography\thebibliography
\renewcommand\thebibliography[1]{
  \OLDthebibliography{#1}
  \setlength{\parskip}{0pt}
  \setlength{\itemsep}{0pt plus 0.3ex}
}
\numberwithin{equation}{section}
\begin{document}
\selectlanguage{english}
\title{\bfseries{Dilations of markovian semigroups of Fourier multipliers on locally compact groups}}
\date{}
\author{\bfseries{C\'edric Arhancet}}

\maketitle

\begin{abstract}
We prove that any weak* continuous semigroup $(T_t)_{t \geq 0}$ of Markov Fourier multipliers acting on a group von Neumann algebra $\VN(G)$ associated to a locally compact group $G$ can be dilated by a weak* continuous group of Markov $*$-automorphisms on a bigger von Neumann algebra. Our construction relies on probabilistic tools and is even new for the group $\R^n$. Our results imply the boundedness of the McIntosh's $\H^\infty$ functional calculus of the generators of these semigroups on the associated noncommutative $\L^p$-spaces. 




\vspace{0.2cm}



%
%


%
%
%
%
%
%
%
%
%
%
\end{abstract}


\makeatletter
 \renewcommand{\@makefntext}[1]{#1}
 \makeatother
 \footnotetext{
 2010 {\it Mathematics subject classification:}
 Primary 47A20, 47D03, 46L51 ; Secondary 47D07.
\\
{\it Key words and phrases}: semigroups, dilations, Markov operators, von Neumann algebras, noncommutative $\L^p$-spaces, functional calculus, Fourier multipliers.}

\section{Introduction}
\label{sec:Introduction}

The study of dilations of operators is of central importance in operator theory and has a long tradition in functional analysis. Indeed, dilations are powerful tools which allow to reduce general studies of operators to more tractable ones. 

Suppose $1<p<\infty$. In the spirit of Sz.-Nagy's dilation theorem for contractions on Hilbert spaces, Fendler \cite{Fen1} proved a dilation result for any strongly continuous semigroup $(T_t)_{t \geq 0}$ of positive contractions on an $\L^p$-space $\L^p(\Omega)$. More precisely, this theorem says that there exists a (bigger) measure space $\Omega'$, two positive contractions $J \co \L^p(\Omega) \to \L^p(\Omega')$ and $P \co \L^p(\Omega') \to \L^p(\Omega)$ and a strongly continuous group of positive invertible isometries $(U_t)_{t \in \R}$ on $\L^p(\Omega')$ such that 
\begin{equation}
\label{Fendler-dilation}
T_t = PU_tJ	
\end{equation}
for any $t \geq 0$, see also \cite{Fen2}. Note that in this situation, $J \co \L^p(\Omega) \to \L^p(\Omega')$ is an isometric embedding whereas $JP \co \L^p(\Omega') \to \L^p(\Omega')$ is a contractive projection.

In the noncommutative setting, measure spaces and $\L^p$-spaces are replaced by von Neumann algebras and noncommutative $\L^p$-spaces and positive maps by completely positive maps. In their remarkable paper \cite{JLM}, Junge and Le Merdy essentially\footnote{\thefootnote. The authors prove that there exists no ``reasonable'' analog of a variant of Fendler's result for a discrete semigroup $(T^k)_{k \geq 0}$ of completely positive contractions.} showed that there is no hope to have a ``reasonable'' analog of Fendler's result for semigroups of completely positive contractions acting on noncommutative $\L^p$-spaces. It is a \textit{striking} difference with the world of classical (=commutative) $\L^p$-spaces of measure spaces. 

The semigroups of selfadjoint Markov Fourier multipliers plays a fundamental role in noncommutative harmonic analysis and operator algebras, see e.g. \cite{Haa1}. 
In this paper, our main result (Theorem \ref{Th-dilation-continuous}) gives a dilation of weak* continuous semigroups of Markov Fourier multipliers acting on the group von Neumann algebra $\VN(G)$ of a locally compact group $G$ in the spirit of \eqref{Fendler-dilation} but at the level $p=\infty$. Our construction induces an isometric dilation similar to the one of Fendler's theorem for the strongly continuous semigroup induced by the semigroup $(T_t)_{t \geq 0}$ on the associated noncommutative $\L^p$-space $\L^p(\VN(G))$ for any $1 \leq p <\infty$. Note that our paper \cite{Arh4} gives a nonconstructive and complicated proof of such a dilation and effective only for \textit{discrete} groups. Here our approach is very different, direct and short. In addition, it can also be used with \textit{non-discrete} locally compact groups. We refer to \cite{Arh1}, \cite{Arh2}, \cite{Arh4}, \cite{ALM}, \cite{AFM}, \cite{HaM} and \cite{Ric} for strongly related things.

One of the important consequences of Fendler's theorem is the boundedness, for the generator of a strongly continuous semigroup $(T_t)_{t \geq 0}$ of positive contractions, of a bounded $\H^\infty$ functional calculus which is a fundamental tool in various areas: harmonic analysis of semigroups, multiplier theory, Kato's square root problem, maximal regularity in parabolic equations, control theory, etc. For detailed information, we refer the reader to \cite{Haa}, \cite{JMX}, \cite{KW}, to the survey  \cite{LeM1} and to the recent book \cite{HvNVW2} and references therein. Our theorem also gives a similar result on $\H^\infty$ functional calculus in the noncommutative context as explained in \cite[Proposition 3.12]{JMX} and \cite[Proposition 5.8]{JMX} in the case of semigroups of Fourier multipliers. 


The paper is organized as follows. The next Section \ref{sec:preliminaries} gives background. In particular, we give some information on crossed products since our construction relies on this notion. We also prove some elementary results which will be used in the sequel. Section \ref{sec:dilation-continuous} gives a proof of our main result of dilation of semigroups of Markov Fourier multipliers. 
In the last section \ref{sec:application}, we describe some applications of our results to functional calculus.

\section{Preliminaries}
\label{sec:preliminaries}

\paragraph{Isonormal processes} Let $H$ be a real Hilbert space. An $H$-isonormal process on a probability space $(\Omega,\mu)$ \cite[Definition 1.1.1]{Nua1} \cite[Definition 6.5]{Neer1} is a linear mapping $\W \co H \to \L^0(\Omega)$ with the following properties:
\begin{flalign}
& \label{isonormal-gaussian} \text{for any $h \in H$ the random variable $\W(h)$ is a centered real Gaussian,} \\
&\label{esperance-isonormal} \text{for any } h_1, h_2 \in H \text{ we have } \E\big(\W(h_1) \W(h_2)\big)= \langle h_1, h_2\rangle_H. \\
&\label{density-isonormal} \text{The linear span of the products } \W(h_1)\W(h_2)\cdots \W(h_m), 
\text{ with } m \geq 0 \text{ and } h_1,\ldots, h_m \\
&\nonumber\text{in }H, \text{ is dense in the real Hilbert space $\L^2_\R(\Omega)$.}\end{flalign}  
Here $\L^0(\Omega)$ denote the space of measurable functions on $\Omega$ and we make the convention that the empty product, corresponding to $m=0$ in \eqref{density-isonormal}, is the constant function $1$. 

If $(e_i)_{i \in I}$ is an orthonormal basis of $H$ and if $(\gamma_i)_{i \in I}$ is a family of independent standard Gaussian random variables on a probability space $\Omega$ then for any $h \in H$, the family $(\gamma_i \langle h,e_i\rangle)_{i \in I}$ is summable in $\L^2(\Omega)$ and
\begin{equation}
\label{Concrete-W}
\W(h)
\ov{\mathrm{def}}{=} \sum_{i \in I} \gamma_i \langle h,e_i\rangle_H, \quad h \in H
\end{equation}
define an $H$-isonormal process.

Recall that the span of elements $\e^{\i\W(h)}$ is weak* dense in $\L^\infty(\Omega)$ by \cite[Remark 2.15]{Jan1}.  
Using \cite[Proposition E.2.2]{HvNVW2}, we see that
\begin{equation}
\label{Esperance-exponentielle-complexe}
\mathrm{E}\big(\e^{\i t\W(h)}\big)
=\e^{-\frac{t^2}{2} \norm{h}_H^2}, \quad t \in \R, h \in H.
\end{equation}

If $u \co H \to H$ is a contraction, we denote by $\Gamma_\infty(u) \co \L^\infty(\Omega) \to \L^\infty(\Omega)$ the (symmetric) second quantization of $u$ acting on the \textit{complex} Banach space $\L^\infty(\Omega)$. Recall that the map $\Gamma_\infty(u) \co \L^\infty(\Omega) \to \L^\infty(\Omega)$ preserves the integral\footnote{\thefootnote. That means that for any $f \in \L^\infty(\Omega)$ we have $\int_{\Omega} \Gamma_\infty(u)f \d\mu=\int_{\Omega} f \d\mu$.}. If $u$ is an isometry we have
\begin{equation}
\label{SQ1}
\Gamma_\infty(u) \big(\e^{\i\W(h)}\big)
=\e^{\i\W(u(h))}, \quad h \in H
\end{equation}
and $\Gamma_\infty(u) \co \L^\infty(\Omega) \to \L^\infty(\Omega)$ is a $*$-automorphism of the von Neumann algebra $\L^\infty(\Omega)$.
Furthermore, the second quantization functor $\Gamma$ satisfies the following. In the part 1, we suppose that the construction\footnote{\thefootnote. The existence of a proof of Lemma \eqref{Lemma-semigroup-continuous} without \eqref{Concrete-W} is unclear.} is given by the concrete representation \eqref{Concrete-W}. We will only use this  observation in the non-discrete case.  

\begin{lemma}
\label{Lemma-semigroup-continuous} 
\begin{enumerate}
	\item If $\L^\infty(\Omega)$ is equipped with the weak* topology then the map $H \to \L^\infty(\Omega)$, $h \mapsto \e^{\i\W(h)}$ is continuous.
	\item If $\pi \co G \to \B(H)$ is a strongly continuous orthogonal representation of a  locally compact group, then $G \to \B(\L^\infty(\Omega))$, $s \mapsto \Gamma_\infty(\pi_s)$ is a weak* continuous\footnote{\thefootnote. That means that $\B(\L^\infty(\Omega))$ is equipped with the point weak* topology.} representation on the Banach space $\L^\infty(\Omega)$.
\end{enumerate}
\end{lemma}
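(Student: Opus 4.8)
The plan is to prove both statements by reducing continuity of the relevant maps to the single estimate \eqref{Esperance-exponentielle-complexe} together with the density statement that the span of the exponentials $\e^{\i\W(h)}$ is weak* dense in $\L^\infty(\Omega)$. For part 1, since $\L^\infty(\Omega) = \L^1(\Omega)^*$ and the net of exponentials is bounded (each $\e^{\i\W(h)}$ has norm $1$), it suffices by a standard density argument to check that $h \mapsto \langle \e^{\i\W(h)}, f\rangle = \int_\Omega \e^{\i\W(h)} f \d\mu$ is continuous for $f$ ranging over a norm-dense subset of $\L^1(\Omega)$; actually boundedness lets us even test against a weak* total subset. The natural choice is to test against products $\e^{\i\W(h_1)}\cdots\e^{\i\W(h_m)} = \e^{\i\W(h_1+\cdots+h_m)}$ — using that $\W$ is linear — whose linear span is weak* dense by \cite[Remark 2.15]{Jan1}, equivalently against all $\e^{\i\W(g)}$, $g\in H$, and the constant $1$. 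Then for fixed $g$, one computes $\int_\Omega \e^{\i\W(h)}\e^{-\i\W(g)}\d\mu = \int_\Omega \e^{\i\W(h-g)}\d\mu = \e^{-\frac12\norm{h-g}_H^2}$ by \eqref{Esperance-exponentielle-complexe} with $t=1$, which visibly depends continuously on $h$ (indeed, on $\norm{h-g}_H$). Since the exponentials together with scalar multiples of $1$ are weak* total and the family $\{\e^{\i\W(h)} : h \in H\}$ is uniformly bounded, continuity on this total set plus uniform boundedness gives weak* continuity of $h \mapsto \e^{\i\W(h)}$ on all of $\L^\infty(\Omega)$.

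For part 2, I would first record that $\Gamma_\infty$ is a functor on contractions, so $s \mapsto \Gamma_\infty(\pi_s)$ is a genuine representation: $\Gamma_\infty(\pi_s \pi_t) = \Gamma_\infty(\pi_s)\Gamma_\infty(\pi_t)$ and $\Gamma_\infty(\mathrm{Id}) = \mathrm{Id}$, and since each $\pi_s$ is an orthogonal (hence isometric) transformation, each $\Gamma_\infty(\pi_s)$ is a weak* continuous $*$-automorphism of $\L^\infty(\Omega)$ by the discussion preceding the lemma, in particular a bounded operator of norm $1$ on $\L^\infty(\Omega)$. It remains to check weak* continuity in $s$, i.e. for every $\xi \in \L^\infty(\Omega)$ and $f \in \L^1(\Omega)$ the scalar function $s \mapsto \langle \Gamma_\infty(\pi_s)\xi, f\rangle$ is continuous. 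Again by uniform boundedness ($\norm{\Gamma_\infty(\pi_s)}=1$) and weak* density of the span of the exponentials, it is enough to verify this for $\xi = \e^{\i\W(h)}$. Here \eqref{SQ1} gives $\Gamma_\infty(\pi_s)(\e^{\i\W(h)}) = \e^{\i\W(\pi_s(h))}$, so the map $s \mapsto \Gamma_\infty(\pi_s)(\e^{\i\W(h)})$ factors as $s \mapsto \pi_s(h) \mapsto \e^{\i\W(\pi_s(h))}$: the first arrow is continuous from $G$ to $H$ by strong continuity of $\pi$, and the second is the weak*-continuous map from part 1. Composing, $s \mapsto \Gamma_\infty(\pi_s)(\e^{\i\W(h)})$ is weak* continuous, and then the density-plus-uniform-boundedness argument (a standard $3\epsilon$ estimate: approximate $\xi$ in weak* topology against the fixed functional $f$ by a finite linear combination of exponentials, then use continuity of each term) upgrades this to weak* continuity of $s \mapsto \Gamma_\infty(\pi_s)\xi$ for every $\xi \in \L^\infty(\Omega)$.

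The only mild subtlety — and the step I would be most careful about — is the density-plus-uniform-boundedness reduction, because weak* density of a subspace does not by itself let one interchange limits; one genuinely needs the uniform norm bound on $\{\Gamma_\infty(\pi_s)\}_{s\in G}$ (resp. on $\{\e^{\i\W(h)}\}_{h\in H}$) to run the $3\epsilon$ argument. Concretely: given $\xi$, $f$, and $\epsi>0$, pick $\eta$ in the span of the exponentials with $|\langle \xi - \eta, f\rangle| < \epsi$; this does \emph{not} immediately control $|\langle \Gamma_\infty(\pi_s)(\xi-\eta), f\rangle|$ for all $s$. The fix is to instead choose the approximation using the fact that $f \mapsto \Gamma_\infty(\pi_s)^*f$ stays in a norm-bounded (indeed norm-$1$-times) set of $\L^1(\Omega)$ — more precisely, since $\Gamma_\infty(\pi_s)$ preserves the integral and is a $*$-automorphism, its preadjoint is an isometry of $\L^1(\Omega)$, so one approximates $\xi$ in the \emph{norm} of some $\L^q$, $q<\infty$, if available, or argues directly on the generating exponentials whose orbit under $\Gamma_\infty(\pi_s)$ is again an exponential. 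Since \eqref{SQ1} shows the orbit of each exponential under the whole group consists of exponentials, and these span a weak* dense subspace, the cleanest route is: prove continuity on exponentials (done above), note the linear span $\mathcal{E}$ of exponentials is a $\Gamma_\infty(\pi_s)$-invariant weak* dense subspace on which $s \mapsto \Gamma_\infty(\pi_s)$ is point-weak* continuous, and then invoke the general fact that a uniformly bounded representation by weak* continuous operators that is point-weak* continuous on a weak* dense invariant subspace is point-weak* continuous everywhere. That general fact is the one genuine obstacle, and it is handled by the standard $3\epsi$/uniform-boundedness argument once invariance is in hand.
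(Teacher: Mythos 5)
Your part 1 is correct, but it takes a genuinely different route from the paper: the paper fixes a sequence $h_n \to h$, uses the concrete representation \eqref{Concrete-W} to get almost everywhere convergence of $\W(h_n)$ to $\W(h)$ and concludes by dominated convergence, whereas you compute $\int_\Omega \e^{\i\W(h)}\e^{-\i\W(g)}\d\mu=\e^{-\frac{1}{2}\norm{h-g}_H^2}$ from \eqref{Esperance-exponentielle-complexe} and reduce to the exponentials by uniform boundedness. This is a nice alternative (it avoids \eqref{Concrete-W}, which the paper's footnote suggests is indispensable), but the phrase ``test against a weak* total subset'' is not by itself a valid reduction: what you actually need is that the span of the exponentials is \emph{norm dense in $\L^1(\Omega)$}. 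That is true here (a $g\in\L^\infty(\Omega)\subset\L^1(\Omega)$ annihilating all exponentials induces a weak*-continuous functional on $\L^\infty(\Omega)$ vanishing on a weak*-dense subspace, whence $\int|g|^2\d\mu=0$; alternatively the exponentials are total in $\L^2(\Omega)$), so part 1 stands once this is said.

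In part 2 there is a genuine gap: the ``general fact'' you invoke at the end --- that a uniformly bounded group of weak*-continuous operators which is point-weak* continuous on a weak*-dense \emph{invariant} subspace is point-weak* continuous everywhere --- is false, and invariance does not rescue the $3\epsi$ argument, for exactly the reason you yourself identified: weak* approximation of $\xi$ by $\eta$ gives no control of $\langle \Gamma_\infty(\pi_s)(\xi-\eta),f\rangle$ uniformly in $s$. A counterexample: on $\ell^\infty(\R)=\ell^1(\R)^*$ (functions with countable summable support as predual), the translation group consists of weak*-continuous isometries, it is point-weak* continuous on the weak*-dense invariant subspace of bounded continuous functions (dominated convergence), yet for $\xi=1_{\{0\}}$ and $f=\delta_0$ the map $t\mapsto\langle U_t\xi,f\rangle$ is discontinuous at $t=0$. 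The correct repair is the one you mention in passing and then abandon, namely to work at the predual level: since each $\pi_s$ preserves the Gaussian structure, $\Gamma_\infty(\pi_s)$ corresponds to a measure-preserving transformation and hence induces a unitary $\Gamma_2(\pi_s)$ of $\L^2(\Omega)$ and an isometry of $\L^1(\Omega)$; the exponentials are norm-total in $\L^2(\Omega)$ and $\norm{\e^{\i\W(h)}-\e^{\i\W(h')}}_{\L^2(\Omega)}^2=2-2\e^{-\frac{1}{2}\norm{h-h'}_H^2}$, so $s\mapsto\Gamma_2(\pi_s)$ is strongly continuous on $\L^2(\Omega)$, and by contractive dense inclusion $\L^2(\Omega)\subset\L^1(\Omega)$ the preadjoint group is strongly continuous on $\L^1(\Omega)$, which is precisely the asserted point-weak* continuity on $\L^\infty(\Omega)$. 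This predual/$\L^2$ argument is in substance the proof of \cite[Lemma 9.3]{JMX}, to which the paper simply refers for this part.
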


\begin{proof}
1. Suppose that the sequence $(h_n)$ converges to $h$ in $H$. Note that there exists an at most countable subset $J$ of $I$ such that $i \in I-J$ implies $\langle h_n,e_i\rangle_H=0$ and $\langle h,e_i\rangle_H=0$. By \eqref{Concrete-W}, note that $\W(h_n)= \sum_{i \in J} \gamma_i \langle h_n,e_i\rangle_H$ and $\W(h)= \sum_{i \in J} \gamma_i \langle h,e_i\rangle_H$ in $\L^2(\Omega)$, hence almost everywhere by \cite[Corollary 6.4.4]{HvNVW2}. For any $i \in J$, the sequence $(\langle h_n,e_i\rangle_H)$ converges to $\langle h,e_i\rangle_H$. Since $J$ is at most countable, $\W(h_n)$ converges almost everywhere to $\W(h)$. It is not difficult to conclude using the dominated convergence theorem.


2. The second part can be proved with the arguments of the proof of \cite[Lemma 9.3]{JMX}.

\end{proof}

Let $H$ be a real Hilbert space. Following \cite[Definition 2.2]{NVW1} and \cite[Definition 6.11]{Neer1}, we say that an $\L^2_\R(\R^+,H)$-isonormal process $\W$ is an $H$-cylindrical Brownian motion. In this case, for any $t \geq 0$ and any $h \in H$, we let
\begin{equation}
\label{Def-Wt-h}
\W_t(h)\ov{\mathrm{def}}{=}\W\big(1_{[0,t]} \ot h\big).	
\end{equation}
We introduce the filtration $(\scr{F}_t)_{t \geq 0}$ generated by $\W$ defined by 
\begin{equation}
\label{Filtration-H-cylindrical}
\scr{F}_t\ov{\mathrm{def}}{=} \sigma\big(\W_r(h) : r \in [0,t], h \in H\big).	
\end{equation} 
By \cite[pages 77]{Neer1}, for any fixed $h \in H$, the family $(\W_t(h))_{t \geq 0}$ is a Brownian motion that is \cite[Definition 6.2]{Neer1}
\begin{flalign}
& \label{isonormal-almost} \text{$\W_0(h) = 0$ almost surely,} \\
&\label{difference-1} \text{$\W_t(h)-\W_s(h)$ is Gaussian with variance $t-s$ for any $0 \leq s \leq t$,} \\
&\label{difference-2} \text{$\W_t(h)-\W_s(h)$ is independent of $\{\W_r(h) :  r \in [0 ,s] \}$ for any $0 \leq s \leq t$}.  
\end{flalign}
Indeed by \cite[163]{Neer1}, 
\begin{flalign}
\label{difference-3}
&\text{the increment $\W_t(h)-\W_s(h)$ is independent of the $\sigma$-algebra $\scr{F}_s$}.
\end{flalign}
Moreover, by \cite[163]{Neer1} the family $(\W_t(h))_{t \geq 0}$ is a martingale with respect to $(\scr{F}_t)_{t \geq 0}$. In particular, the random variable $\W_t(h)$ is $\scr{F}_t$-measurable. If $0 \leq s \leq t$, note that 
$$
\norm{1_{]s,t]} \ot h}_{\L^2_\R(\R^+,H)}^2
= \bnorm{1_{]s,t]}}_{\L^2_\R(\R^+)}^2 \norm{h}_H^2
=(t-s) \norm{h}_H^2.
$$ 
Using \eqref{Esperance-exponentielle-complexe} together with the previous computation, we obtain
\begin{equation}
\label{Esperance-exponentielle-complexe-2}
\mathrm{E}\big(\e^{\i \W(1_{]s,t]} \ot h)}\big)
=\e^{-\frac{t-s}{2} \norm{h}_H^2}, \quad 0 \leq s \leq t,\ h \in H.
\end{equation}



\paragraph{Von Neumann algebras} 
Let $M$ be a von Neumann algebra. We denote by $\mathrm{U}(M)$ the group of all unitaries of $M$. Suppose that $M$ is equipped with a semifinite normal faithful weight $\psi$. We denote by $\mathfrak{n}_\psi$ the left ideal of all $x \in M$ such that $\psi(x^*x)<\infty$.



Let $M$ and $N$ be von Neumann algebras equipped with faithful normal semifinite weights $\psi_M$ and $\psi_N$. We say that a positive linear map $T \co M \to N$ is weight preserving if for any $x \in \mathfrak{m}_{\psi_M}^+$ we have $\psi_N(T(x))=\psi_M(x)$. 

Note that a locally convex space $X$ is said to be quasi-complete if every bounded Cauchy net of $X$ converges \cite[Definition 4.23]{Osb1}. Recall that if $H$ is a Hilbert space and if $M$ is a von Neumann algebra acting on $H$ then the $\sigma$-strong* topology on $M$ is defined by the seminorms $x \mapsto \big(\sum_{n=1}^{\infty} (\norm{x(\xi_n)}^2+\norm{x^*(\xi_n)}^2)\big)^{\frac{1}{2}}$ where $\xi_n \in H$ and $\sum_{n=1}^{\infty} \norm{\xi_n}^2<\infty$. The following lemma is folklore. Unable to locate this statement in the literature, we give a short argument.

\begin{lemma}
\label{Lemma-quasi-complete}
Let $M$ be a von Neumann algebra acting on a Hilbert space $H$. Then $M$ equipped with the canonical locally convex structure which gives the $\sigma$-strong* topology is quasi-complete.
\end{lemma}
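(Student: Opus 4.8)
The plan is to reduce the quasi-completeness of $M$ in the $\sigma$-strong* topology to the completeness of the Hilbert space $H$ (and hence of suitable spaces of square-summable $H$-valued sequences), together with the fact that $M$ is closed in $\B(H)$ for the weak operator topology — i.e.\ the von Neumann bicommutant theorem. Concretely, let $(x_\alpha)$ be a bounded Cauchy net in $M$ for the $\sigma$-strong* topology, say $\norm{x_\alpha} \leq C$ for all $\alpha$. I first want to produce a candidate limit operator $x \in \B(H)$, then show $x \in M$, and finally show $x_\alpha \to x$ $\sigma$-strongly*.

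**First I would** fix $\xi \in H$; since the single seminorm $y \mapsto \norm{y(\xi)}$ (take $\xi_1 = \xi$, $\xi_n = 0$ otherwise) is among those defining the topology, $(x_\alpha(\xi))$ is Cauchy in $H$, hence converges to some vector I call $x(\xi)$. Linearity of $\xi \mapsto x(\xi)$ is immediate, and boundedness follows from $\norm{x(\xi)} = \lim_\alpha \norm{x_\alpha(\xi)} \leq C\norm{\xi}$, so $x \in \B(H)$ with $\norm{x} \leq C$. Applying the same argument to the adjoints — using the seminorm $y \mapsto \norm{y^*(\xi)}$ — gives that $(x_\alpha^*(\xi))$ converges for every $\xi$; the limit map is then easily identified with $x^*$ via $\la x(\xi),\eta\ra = \lim \la x_\alpha(\xi),\eta\ra = \lim \la \xi, x_\alpha^*(\eta)\ra$. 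In particular $x_\alpha \to x$ and $x_\alpha^* \to x^*$ in the strong operator topology, hence in the weak operator topology, so $x$ lies in the weak-operator closure of $M$, which is $M$ itself since $M$ is a von Neumann algebra.

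**The remaining point** — and the step that needs a little care rather than being purely formal — is to upgrade strong* convergence on individual vectors to convergence for all the $\sigma$-strong* seminorms, i.e.\ to handle a full square-summable sequence $(\xi_n)$ with $\sum_n \norm{\xi_n}^2 < \infty$ simultaneously. Here I would use a standard $\epsi/3$-type argument exploiting the uniform bound: given $\epsi > 0$, choose $N$ with $\sum_{n > N} \norm{\xi_n}^2 < \epsi$, so that the tail contributes at most $(C + \norm{x})^2 \epsi \leq 4C^2\epsi$ uniformly in $\alpha$ to $\sum_n (\norm{(x_\alpha - x)(\xi_n)}^2 + \norm{(x_\alpha - x)^*(\xi_n)}^2)$, and then use strong* convergence on the finitely many vectors $\xi_1, \dots, \xi_N$ to make the head small for $\alpha$ large. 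This shows $x_\alpha \to x$ in the $\sigma$-strong* topology, completing the proof that every bounded $\sigma$-strong* Cauchy net in $M$ converges in $M$. The main (mild) obstacle is bookkeeping the uniform bound on both $(x_\alpha)$ and its adjoints throughout the tail estimate; everything else is a direct appeal to Hilbert space completeness and the bicommutant theorem.
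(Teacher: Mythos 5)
Your proof is correct and is essentially the paper's argument: the candidate limit is produced from pointwise strong and strong* convergence using completeness of $H$, the norm bound comes from boundedness of the net (via the uniform boundedness principle, as the paper makes explicit), and the limit lies in $M$ because a von Neumann algebra is closed for the weak/strong operator topology. The only divergence is at the end: the paper passes from strong* convergence of the bounded net to $\sigma$-strong* convergence by citing that these two topologies coincide on bounded sets, whereas you reprove exactly that instance by hand with the $\varepsilon/3$ tail estimate (your tail bound should read $2(2C)^2\varepsilon = 8C^2\varepsilon$ rather than $4C^2\varepsilon$ because of the two terms, but the constant is immaterial).
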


\begin{proof}
First, we prove that $M$ equipped with its canonical locally convex structure which gives the strong topology is quasi-complete. Consider a bounded Cauchy net $(x_i)$ of elements of $M$ for this structure. Then, for any $h \in H$, $(x_i(h))$ is a Cauchy net of elements of $H$, hence converges. So $(x_i)$ is a net which converges for the strong topology and which is norm-bounded by the uniform boundedness principle \cite[Theorem 1.8.9]{KaR1}. It is well-known that this implies that $(x_i)$ converges strongly to some \textit{bounded} linear operator $x$. Since each $x_i$ belongs to $M$. We conclude that $x$ belongs to $M$, we conclude that the strong structure is quasi-complete.

Recall that a net $(x_i)$ is Cauchy in the strong* topology if and only if $(x_i)$ and $(x_i^*)$ are Cauchy in the strong operator topology and that $(x_i)$ converges to $x$ in the strong* topology if and only if $(x_i)$ and $(x_i^*)$ converge to $x$ and $x^*$ in the strong operator topology. Using this fact, it is not difficult to see that the canonical locally convex structure which gives the strong* topology is quasi-complete. Finally, recall that by \cite[Lemma 2.5]{Tak1} the $\sigma$-strong* topology coincide with the strong* topology on bounded subsets. So the proof is complete.
\end{proof}

\paragraph{Group von Neumann algebras} 
Let $G$ be a locally compact group equipped with a fixed left invariant Haar measure $\mu_G$. 
Let $\VN(G)$ be the von Neumann algebra of $G$ generated by the set $\big\{\lambda(g) : g \in \L^1(G)\big\}$. It is called the group von Neumann algebra of $G$ and is equal to the von Neumann algebra generated by the set $\{\lambda_s : s \in G\}$ where 
$\lambda_s  \co \L^2(G) \to \L^2(G), f \longmapsto (t \mapsto f(s^{-1}t))$ 
is the left translation by $s$.

\paragraph{Crossed products} We refer to \cite{Haa2}, \cite{Haa3}, \cite{Str1}, \cite{Sun} and \cite{Tak2}. Let $M$ be a von Neumann algebra acting on a Hilbert space $H$. Let $G$ be a locally compact group equipped with some left Haar measure $\mu_G$. Let $\alpha \co G \to M$ be a representation of $G$ on $M$ which is weak* continuous, i.e. for any $x \in M$ and any $y \in M_*$, the map $G \to M$, $s \mapsto \langle \alpha_{s}(x), y \rangle_{M,M_*}$ is continuous. For any $x \in M$, we define the operators $\pi(x)\co \L^2(G,H) \to \L^2(G,H)$ \cite[(2) page 263]{Str1} by
\begin{equation}
\label{}
\big(\pi(x) \xi \big)(s)
\ov{\mathrm{def}}{=}\alpha^{-1}_s(x) \xi(s),\quad
\quad  \xi \in \L^2(G, H), s \in G.
\end{equation}
These operators satisfy the following commutation relation \cite[(2) page 292]{Str1}:
\begin{equation}
\label{commutation-rules}
(\lambda_s \ot \Id_H) \pi(x) (\lambda_s \ot \Id_H)^*
= \pi(\alpha_{s}(x)),
\quad x \in M, s \in G.
\end{equation}
Recall that the crossed product of $M$ and $G$ with respect to $\alpha$ is the von Neumann algebra $M \rtimes_\alpha G=(\pi(M) \cup \{\lambda_s \ot \Id_{H}: s \in G\})''$ on the Hilbert space $\L^2(G,H)$ generated by the operators $\pi(x)$ and $\lambda_s \ot \Id_{H}$ where $x \in M$ and $s \in G$. By \cite[page 263]{Str1} or \cite[Proposition 2.5]{Dae1}, $\pi$ is a normal injective $*$-homomorphism from $M$ into $M \rtimes_\alpha G$ (hence $\sigma$-strong* continuous). 

We denote by $\mathcal{K}(G,M)$ the space of $\sigma$-strong* continuous function $f \co G \to M$, $s \mapsto f_s$ with compact support. If $f \in \mathcal{K}(G,M)$ then $f(G)$ is a $\sigma$-strong* compact subset of $M$, hence by \cite[Proposition 2.7 d)]{Osb1} a $\sigma$-strong* bounded subset of $M$. Hence it is a strong bounded subset and finally a norm-bounded subset of $M$ by the principle of uniform boundedness \cite[Theorem 1.8.9]{KaR1}. Note that by \cite[Proposition page 186]{Str1} and \cite[page 41]{Str1}, the bounded function $G \to M$, $s \mapsto \lambda_s \ot \Id_H$ is $\sigma$-strong* continuous and the norm-bounded function $s \mapsto \pi(f_s)$ is also $\sigma$-strong* continuous. Recall that the product of $M$ is $\sigma$-strong* continuous on bounded subsets by \cite[Proposition 2.4.5]{BrR1}. We infer\footnote{\thefootnote. In the book \cite{Str1}, the author considers weak* continuous functions, it is problematic since the product of $M$ is not weak* continuous even on bounded sets by \cite[Exercise 5.7.9]{KaR1} (indeed this latter fact is equivalent to the weak continuity of the product on bounded sets by \cite[Lemma 2.5]{Tak1}).} that the function $G \to M \rtimes_\beta G$, $s \mapsto \pi(f_s)(\lambda_s \ot \Id_H)$ is $\sigma$-strong* continuous with compact support. 
So, by Lemma \ref{Lemma-quasi-complete} and \cite[Corollary 2, III page 38]{Bou1} we can define the element $\int_G f_s \rtimes \lambda_s \d\mu_G(s)$ of the crossed product $M \rtimes_\alpha G$ by 
\begin{equation}
\label{def-integrale-crossed}
\int_G f_s \rtimes \lambda_s \d\mu_G(s)
\ov{\mathrm{def}}{=} \int_G \pi(f_s)(\lambda_s \ot \Id_H) \d\mu_G(s).
\end{equation}

%
%

If $f,g \in \mathcal{K}(G,M)$ then essentially by the proof of \cite[Lemma 2.3]{Haa2} and \cite[page 289]{Str1} the function $g*f \co G \to M$, $t \to \int_G g(s)f(s^{-1}t) \d\mu_G(s)$ is $\sigma$-strong* continuous and we have
\begin{equation}
\label{Def-produit-produit-croise}
\bigg( \int_G g_s \rtimes \lambda_s \d\mu_G(s)\bigg)\bigg( \int_G f_s \rtimes \lambda_s \d\mu_G(s)\bigg)
=\int_G (g*f)(s) \rtimes \lambda_s \d\mu_G(s).
\end{equation}

It is very easy to see that the space of elements $\int_G f_s \rtimes \lambda_s \d\mu_G(s)$ for $f \in \mathcal{K}(G,M)$ is a weak* dense subalgebra of $M \rtimes_\alpha G$.

The following is a particular case\footnote{\thefootnote. The function $u \co G \to \mathrm{U}(M)$ is a $\alpha$-1-cocycle.} of \cite[Proposition 3.5]{Tak3} and its proof, see also \cite[Theorem 1.7 (ii) p. 241]{Tak2}. Note that $M$ is abelian in the statement. With \cite[Proposition 2, III page 35]{Bou1}, the last part is an easy computation left to the reader.

\begin{prop}
\label{Prop-Takesaki}
Let $M$ be an abelian von Neumann algebra acting on a Hilbert space $H$ equipped with a continuous action $\alpha$ of a locally compact group $G$. Suppose that there exists a strongly continuous function $u \co G \to \mathrm{U}(M)$ such that
\begin{equation}
\label{equation-unitaries}
u(sr)
=u(s)\alpha_s(u(r)),\quad s,r \in G. 
\end{equation}
Then $V \co \L^2(G,H) \to \L^2(G,H)$, $\xi \mapsto (s \mapsto u(s^{-1})(\xi(s)))$ is a unitary and we have an $*$-isomorphism $\mathcal{U} \co M \rtimes_{\alpha} G \to  M \rtimes_{\alpha} G$, $x \mapsto VxV^*$ such that
\begin{equation}
\label{Ul-def}
\mathcal{U}(\lambda_s \ot \Id_H)
=\pi(u(s)^*)(\lambda_s \ot \Id_H)
\quad \text{and} \quad 
\mathcal{U}(\pi(x))
=\pi(x), \quad s \in G, x \in M.
\end{equation}
Moreover, for any $f \in \mathcal{K}(G,M)$, we have
\begin{equation}
\label{quoi}
\mathcal{U}\bigg(\int_G f_s \rtimes \lambda_s \d\mu_G(s) \bigg)
=\int_G u(s)^* f_s  \rtimes \lambda_s \d\mu_G(s).
\end{equation}
\end{prop}


%
Now, we suppose that $M$ is \textit{finite} and equipped with a normal finite faithful trace $\tau$. By \cite[Lemma 3.3]{Haa2} \cite[Theorem p. 301]{Str1} \cite[Theorem 1.17]{Tak2}, there exists a unique normal semifinite faithful weight $\varphi_{\rtimes}$ on $M \rtimes_{\beta} G$ which satisfies for any $f,g \in \mathcal{K}(G,M)$ the  fundamental ``noncommutative Plancherel formula''
\begin{equation}
\label{Plancherel-Non-com}
\varphi_{\rtimes}\bigg(\bigg( \int_G f_s \rtimes \lambda_s \d\mu_G(s)\bigg)^*\bigg( \int_G g_s \rtimes \lambda_s \d\mu_G(s)\bigg)\bigg)
=\int_G \tau(f_s^*g_s) \d\mu_G(s)
\end{equation}
and the relations $\sigma_t^{\varphi_{\rtimes}}(\pi(x))=\pi(x)$ where $x \in M$ and $t \in \R$
and 
\begin{equation*}
\sigma_t^{\varphi_{\rtimes}}(\lambda_s \ot \Id_H)
=\Delta_G^{\i t}(s)(\lambda_s \ot \Id_H)\pi([\D(\tau \circ \alpha_s):\D\tau]_t), \quad s \in G,t \in \R.
\end{equation*}
If $M=\C$, we recover the Plancherel weight $\varphi_G$ on $\VN(G)$. If each $\alpha_s \co M \to M$ is trace preserving, we obtain in particular 
\begin{equation*}
\label{modular-group-crossed-prime}
\sigma_t^{\varphi_{\rtimes}}(\lambda_s \ot \Id_H)
=\Delta_G^{\i t}(s)(\lambda_s \ot \Id_H), \quad s \in G,t \in \R.
\end{equation*}
Using \cite[Proposition 2, III page 35]{Bou1}, we deduce that
\begin{equation}
\label{modular-group-crossed}
\sigma_t^{\varphi_{\rtimes}}\bigg(\int_G f_s \rtimes \lambda_s \d\mu_G(s)\bigg)
=\int_G \Delta_G^{\i t}(s)f_s \rtimes \lambda_s \d\mu_G(s), \quad f \in \mathcal{K}(G,M), t \in \R.
\end{equation}

\paragraph{Fourier multipliers} Let $G$ be a locally compact group. We say that a weak* continuous operator $T \co \VN(G)) \to \VN(G)$ is a Fourier multiplier if there exists a continuous function $\phi \co G \to \C$ such that for any $s \in G$ we have $T(\lambda_s)=\phi(s)\lambda_s$. In this case $\phi$ is bounded and for any $f \in \L^1(G)$ the element $\int_{G} \phi(s) f(s) \lambda_s \d \mu_G(s)$ belongs to $\VN(G)$ and 
\begin{equation}
\label{equ-def-Fourier-mult}
T\bigg(\int_{G} f(s) \lambda_s \d \mu_G(s)\bigg) 
= \int_{G} \phi(s) f(s) \lambda_s \d \mu_G(s), \quad \text{i.e.} \quad T(\lambda(f)) =\lambda(\phi f).
\end{equation}
In this case, we let $T=M_\phi$ and we say that $\phi$ is the symbol of $T$. We refer to the book \cite{KaL1} and references therein for more information and to \cite{ArK1} for Fourier multipliers on noncommutative $\L^p$-spaces.

By \cite[Theorem 4.1]{HJX} (see also \cite[Proposition 3.13]{Dae1}), we have the following result. Note that the proof of \cite[Theorem 4.1]{HJX} does \textit{not} use the fact that $G$ is abelian. The second part is an obvious observation left to the reader.

\begin{lemma}
\label{Lemma-crossed}
Let $G$ be a locally compact group and $\alpha \co G \to \Aut(M)$ be a weak* continuous action on a von Neumann algebra $M$ equipped with a normal semifinite faithful weight. Let $T \co M \to M$ be a weight preserving $*$-automorphism such that $\E\alpha(s)=\alpha(s)\E$ for any $s \in G$.
\begin{enumerate}
\item There exists a weight preserving $*$-automorphism $T \rtimes \Id_{\VN(G)} \co M \rtimes_{\alpha} G \to M \rtimes_{\alpha} G$ 
such that for any $s \in G$ and any $x \in M$
\begin{equation}
\label{def-iso-croise}
\big(T \rtimes \Id_{\VN(G)}\big)(\pi(x))=\pi(T(x)),
\quad 
\big(T \rtimes \Id_{\VN(G)}\big)(\lambda_s \ot \Id_{H})=\lambda_s \ot \Id_{H}.
\end{equation}
\item For any function $f \in \mathcal{K}(G,M)$, we have 
\begin{equation}
\label{crossed-2}
\big(T \rtimes \Id_{\VN(G)}\big)\bigg(\int_G f_s \rtimes \lambda_s \d\mu_G(s) \bigg)
	=\int_G T(f_s) \rtimes \lambda_s \d\mu_G(s).
\end{equation}
\end{enumerate}
\end{lemma}

\paragraph{Semigroups of Fourier multipliers} Consider a locally compact group $G$ with identity element $e$. Let $(T_t)_{t \geq 0}$ be a weak* continuous semigroup of selfadjoint unital completely positive Fourier multipliers. For any $t \geq 0$, let $\phi_t \co G \to \C$ be the continuous symbol of $T_t$. Since each $T_t$ is selfadjoint, $\phi_t$ is real-valued. By \cite[Proposition 4.2]{DCH}, the function $\phi_t$ is of positive type. Moreover, for any $t,t' \geq 0$ and any $s \in G$, the relation $T_tT_{t'}(\lambda_s)=T_{t+t'}(\lambda_s)$ gives $\phi_t(s)\phi_{t'}(s)=\phi_{t+t'}(s)$. Furthermore, for any $s \in G$, any $x \in \VN(G)_*$ and any $t \geq 0$, we have 
$$
\phi_t(s)\langle \lambda_s, x\rangle_{\VN(G),\VN(G)_*}
=\langle T_t(\lambda_s), x\rangle_{\VN(G),\VN(G)_*} 
\xra[t \to 0]{} \langle\lambda_s, x\rangle_{\VN(G),\VN(G)_*}.
$$
We infer that $\lim_{t \to 0} \phi_t(s)=1$. Consequently, there exists a uniquely determined real number $\psi(s)$ such that
$$
\phi_t(s)
=\e^{-t\psi(s)}, \quad t \geq 0, s \in G.
$$ 
Since each $T_t$ is unital, we have $\psi(e)=0$. It is easy to check that $\psi \co G \to \R$ is continuous. By Schoenberg's theorem \cite[Corollary C.4.19]{BHV}, we deduce that the function $\psi$ is conditionally of negative type.  

By \cite[Proposition 2.10.2]{BHV}, there exist a real Hilbert space $H$ and an affine isometric action $\beta \co G \to \mathrm{Isom}(H)$, $s \mapsto \beta_s$ such that the linear span of $\{\beta_s(0) : s \in G\}$ is dense in $H$ and such that
\begin{equation}
\label{psiG=} 
\psi(s)
=\norm{\beta_s(0)}_H^2, \quad s \in G.       
\end{equation}  
By \cite[page 75]{BHV}, we can consider the associated orthogonal representation $\pi \co G \to \B(H)$, $s \mapsto \pi_s$ of $G$ on $H$. By \cite[Lemma 2.2.1]{BHV}, there exists a 1-cocycle $b_\psi \co G \to H$ with respect to $\pi$, i.e. a continuous function satisfying the cocycle law 
\begin{equation}
\label{Cocycle-law}
\pi_s(b_\psi(r))
=b_\psi(sr)-b_\psi(s),
\quad \text{i.e.} \quad 
b_\psi(sr)
=b_\psi(s)+\pi_s(b_\psi(r)), \quad s,r \in G,
\end{equation}
such that 
\begin{equation}
\label{lien-affine-cocycle}
\beta_s(h)
=\pi_s(h)+b_\psi(s), \quad h \in H.
\end{equation}
For any $s \in G$, we deduce that 
\begin{equation}
\label{liens-psi-bpsi}
\psi(s)
\ov{\eqref{psiG=}}{=} \norm{\beta_s(0)}_H^2
\ov{\eqref{lien-affine-cocycle}}{=} \norm{b_\psi(s)}_{H}^2.
\end{equation}
\section{Dilations of semigroups of Fourier multipliers}
\label{sec:dilation-continuous}

The following theorem is our main result.

\begin{thm}
\label{Th-dilation-continuous}
Let $G$ be a locally compact group. Let $(T_t)_{t \geq 0}$ be a weak* continuous semigroup of selfadjoint unital completely positive Fourier multipliers on the group von Neumann algebra $\VN(G)$. Then, there exist a von Neumann algebra $M$ equipped with a semifinite normal faithful weight $\varphi_M$, a weak* continuous group $(U_t)_{t \in \R}$ of weight preserving $*$-automorphisms of $M$, a unital weight preserving injective normal $*$-homomorphism $J \co \VN(G) \to M$ such that 
\begin{equation}
\label{commute-modular-group}
\sigma_t^{\varphi_M} \circ J 
=J \circ \sigma_t^{\varphi_G}
\end{equation}
for any $t \in \R$ and satisfying
\begin{equation}
\label{Tt-dilation}
T_t
=\E U_tJ
\end{equation}
for any $t \geq 0$, where $\E \co M \to \VN(G)$ is the canonical faithful normal weight preserving conditional expectation associated with $J$. Moreover, we have the following properties.
\begin{enumerate}
	\item If $G$ is discrete then $\varphi_M$ is a normal finite faithful trace.
	\item If $G$ is unimodular then $\varphi_M$ is a normal semifinite faithful trace.
	\item If $G$ is amenable then the von Neumann algebra $M$ is injective.
\end{enumerate}
\end{thm}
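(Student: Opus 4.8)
The plan is to build $M$ as a crossed product of a Gaussian abelian von Neumann algebra by $G$, and to realize the dilating group $(U_t)_{t\in\R}$ via the second quantization of the rotation $\e^{\i t\sqrt{2}(\cdot)}$ on the cocycle Hilbert space, transported through the cocycle identity \eqref{Cocycle-law} using Proposition \ref{Prop-Takesaki}. More precisely, given the semigroup $(T_t)_{t\geq 0}$ with symbol $\phi_t=\e^{-t\psi}$, take the real Hilbert space $H$, the orthogonal representation $\pi\co G\to\B(H)$ and the $1$-cocycle $b_\psi\co G\to H$ from \eqref{psiG=}--\eqref{liens-psi-bpsi}. Let $\W\co H\to\L^0(\Omega)$ be an $H$-isonormal process realized concretely as in \eqref{Concrete-W}, so that $N\ov{\mathrm{def}}{=}\L^\infty(\Omega)$ is an abelian von Neumann algebra with the trace given by $\int_\Omega\cdot\,\d\mu$. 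The representation $\pi$ induces a weak* continuous action $\alpha\co G\to\Aut(N)$, $\alpha_s\ov{\mathrm{def}}{=}\Gamma_\infty(\pi_s)$ (weak* continuity is exactly Lemma \ref{Lemma-semigroup-continuous}(2)), each $\alpha_s$ trace preserving since $\pi_s$ is orthogonal. Now set $M\ov{\mathrm{def}}{=}N\rtimes_\alpha G$ acting on $\L^2(G,\L^2(\Omega))$, with $\varphi_M\ov{\mathrm{def}}{=}\varphi_\rtimes$ the weight from \eqref{Plancherel-Non-com}. Define $J\co\VN(G)\to M$ on generators by $J(\lambda_s)\ov{\mathrm{def}}{=}\e^{\i\sqrt{2}\W(b_\psi(s))}\rtimes\lambda_s$, i.e. $J(\lambda_s)=\pi_N\big(\e^{\i\sqrt{2}\W(b_\psi(s))}\big)(\lambda_s\ot\Id)$ where $\pi_N$ is the embedding of $N$ into the crossed product. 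The cocycle law \eqref{Cocycle-law} together with \eqref{SQ1} gives
\[
\e^{\i\sqrt2\W(b_\psi(sr))}=\e^{\i\sqrt2\W(b_\psi(s))}\,\alpha_s\big(\e^{\i\sqrt2\W(b_\psi(r))}\big),
\]
which is precisely the multiplicativity needed for $J$ to extend to a normal unital $*$-homomorphism (one checks it on the dense subalgebra of elements $\int_G f_s\rtimes\lambda_s\,\d\mu_G(s)$, using \eqref{Def-produit-produit-croise}); injectivity follows because $\pi_N$ and $s\mapsto\lambda_s\ot\Id$ are injective and the $\e^{\i\sqrt2\W(b_\psi(s))}$ are unitaries. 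That $J$ is weight preserving (hence the conditional expectation $\E$ exists and is weight preserving) and satisfies \eqref{commute-modular-group} is read off from \eqref{Plancherel-Non-com} and \eqref{modular-group-crossed}: since $\alpha$ is trace preserving, $\sigma_t^{\varphi_M}$ acts by $\Delta_G^{\i t}(s)$ on the $\lambda_s$-part, matching the modular group $\sigma_t^{\varphi_G}$ of the Plancherel weight on $\VN(G)$.

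Next I construct $(U_t)_{t\in\R}$. For fixed $t\in\R$, consider the family of unitaries $u_t\co G\to\mathrm{U}(N)$, $u_t(s)\ov{\mathrm{def}}{=}\e^{\i\sqrt2\,(\cos t-1)\W(b_\psi(s))}\cdot(\text{phase involving }\sin t)$ — more cleanly, work on the complexification $H_\C$ and let $R_t$ be the rotation by angle $t$ in each plane $\vect\{h,\i h\}$; then the map $u_t(s)\ov{\mathrm{def}}{=}\Gamma_\infty\text{-image of }\e^{\i\sqrt2\W(\,\cdot\,)}$ evaluated at the "rotated minus original" vector gives, via the cocycle law, exactly the $1$-cocycle identity \eqref{equation-unitaries}: $u_t(sr)=u_t(s)\alpha_s(u_t(r))$. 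Feeding $u_t$ into Proposition \ref{Prop-Takesaki} yields a $*$-automorphism $U_t$ of $M=N\rtimes_\alpha G$ with $U_t(\pi_N(x))=\pi_N(x)$ and $U_t(\lambda_s\ot\Id)=\pi_N(u_t(s)^*)(\lambda_s\ot\Id)$; strong continuity of $t\mapsto\pi_t$ plus Lemma \ref{Lemma-semigroup-continuous}(1) gives strong continuity of $t\mapsto u_t$, hence the group law $U_{t+t'}=U_tU_{t'}$ (from additivity of rotations and the cocycle bookkeeping) and weak* continuity of $(U_t)$. Each $U_t$ preserves $\varphi_M$ by the weight preservation built into Proposition \ref{Prop-Takesaki} (conjugation by a unitary of the crossed product coming from a cocycle, combined with \eqref{Plancherel-Non-com}). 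Finally, for the dilation formula \eqref{Tt-dilation}: $\E\co M\to\VN(G)$ is the conditional expectation onto $J(\VN(G))\cong\VN(G)$ — concretely, on the crossed product it averages out the Gaussian variables, so $\E\big(\e^{\i a\W(h)}(\lambda_s\ot\Id)\big)=\E_{N}(\e^{\i a\W(h)})\cdot\lambda_s$ where $\E_N$ is the trace-state on $N$. Then
\[
\E U_tJ(\lambda_s)=\E\big(\pi_N(u_t(s)^*\e^{\i\sqrt2\W(b_\psi(s))})(\lambda_s\ot\Id)\big)
=\Big(\textstyle\int_\Omega \e^{\i\sqrt2\W((\cos t-1)b_\psi(s))+\cdots}\,\d\mu\Big)\lambda_s,
\]
and by \eqref{Esperance-exponentielle-complexe} this Gaussian integral equals $\e^{-\frac12\cdot 2\|(1-\cos t)\,\cdots\|^2}$-type expression; choosing the rotation/scaling normalization correctly this collapses to $\e^{-t\psi(s)}\lambda_s=T_t(\lambda_s)$. (The precise constant $\sqrt2$ and the exact form of $u_t$ must be pinned down so that the variance bookkeeping produces $t\psi(s)=t\|b_\psi(s)\|_H^2$ and not $2\psi(s)(1-\cos t)$; I expect to use a \emph{dilation of the contraction semigroup} $\e^{-t}$ on the real line into rotations on a plane — i.e. the classical Sz.-Nagy trick $\e^{-t}=\langle e_1, R'_t e_1\rangle$ for a rotation-like $R'_t$ — applied to the cocycle, rather than a literal rotation of $H$.)

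The three supplementary properties are then quick. If $G$ is discrete, $\mu_G$ is counting measure and \eqref{Plancherel-Non-com} shows $\varphi_M(1)=\tau_N(1)=1$, so $\varphi_M$ is a finite trace (and $\Delta_G\equiv 1$ makes it a trace in general in the discrete case). If $G$ is unimodular, $\Delta_G\equiv 1$, so by \eqref{modular-group-crossed} $\sigma_t^{\varphi_M}=\Id$ and $\varphi_M$ is a semifinite trace. If $G$ is amenable, then $\alpha$ being an action of an amenable group on the injective (abelian) von Neumann algebra $N=\L^\infty(\Omega)$ makes the crossed product $M=N\rtimes_\alpha G$ injective (a standard fact about crossed products by amenable groups).

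\textbf{Main obstacle.} The delicate point is the precise choice of the cocycle-valued unitary $u_t$ in Proposition \ref{Prop-Takesaki}: it must simultaneously (i) satisfy the cocycle identity \eqref{equation-unitaries} exactly — which forces it to be built from $b_\psi$ via the second quantization and the cocycle law, leaving little freedom; (ii) depend on $t$ additively enough to yield a one-parameter \emph{group} $(U_t)_{t\in\R}$; and (iii) produce, after applying the trace-state $\E_N$ and \eqref{Esperance-exponentielle-complexe}, exactly the scalar $\e^{-t\psi(s)}$ and not some other function of $t$. Reconciling (ii) (which wants rotations, periodic in $t$) with (iii) (which wants the non-periodic decay $\e^{-t\psi(s)}$) is the crux: the resolution is to enlarge $H$ by tensoring with an auxiliary $\L^2(\R)$ (or an infinite "rotation-to-translation" limit, as in the standard construction of a unitary dilation of a self-adjoint contraction semigroup), so that the group of rotations on the enlarged space, compressed back, gives the semigroup $\e^{-t\psi(s)}$. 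Getting this enlargement to interact correctly with the crossed-product structure — i.e. making $b_\psi$ land in the enlarged space in a way still compatible with $\pi$ and the cocycle law — is where the real work lies; everything else is bookkeeping with the formulas already recorded in Section \ref{sec:preliminaries}.
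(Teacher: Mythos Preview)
Your infrastructure is right --- the crossed product $M=\L^\infty(\Omega)\rtimes_\alpha G$ with $\alpha_s=\Gamma_\infty(\pi_s)$, the dual weight $\varphi_\rtimes$, and the use of Proposition~\ref{Prop-Takesaki} to produce $(U_t)$ from a $1$-cocycle $u_t\co G\to\mathrm{U}(\L^\infty(\Omega))$ --- and the supplementary items (1)--(3) are handled correctly. But you have misidentified the hard step, and the ``main obstacle'' you describe is not there.

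The paper takes $J$ to be the \emph{canonical} embedding $\lambda_s\mapsto 1\rtimes\lambda_s$ (your twisted $J(\lambda_s)=\e^{\i\sqrt2\W(b_\psi(s))}\rtimes\lambda_s$ is just $U_1\circ J_{\mathrm{canonical}}$, so this is cosmetic). The crucial point is the choice of $u_t$: the paper sets
\[
u_t(s)\;=\;\e^{-\sqrt{2}\,\i t\,\W(b_\psi(s))}\,.
\]
This is not built from any rotation of $H$ or its complexification; it is simply the one-parameter unitary group $t\mapsto \e^{\i t X(s)}$ in $\L^\infty(\Omega)$ generated by the real random variable $X(s)=-\sqrt{2}\,\W(b_\psi(s))$. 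The cocycle identity \eqref{equation-unitaries} follows immediately from \eqref{Cocycle-law} and \eqref{SQ1}, exactly the computation you wrote down. The group law $V_tV_{t'}=V_{t+t'}$ (hence $U_tU_{t'}=U_{t+t'}$) is then trivial, by additivity of the exponent. And the dilation formula drops out of the Gaussian characteristic function \eqref{Esperance-exponentielle-complexe}:
\[
\E U_tJ(\lambda_s)=\E\big(\e^{\sqrt2\,\i t\,\W(b_\psi(s))}\big)\lambda_s
=\e^{-t\norm{b_\psi(s)}_H^2}\lambda_s=\e^{-t\psi(s)}\lambda_s\,.
\]
There is no tension between ``group in $t$'' and ``non-periodic decay'': the map $t\mapsto \e^{\i t X}$ is a genuine one-parameter group of unitaries which is \emph{not} periodic (because $X$ has a continuous distribution), and its expectation decays. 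No enlargement of $H$ by $\L^2(\R)$, no complexification, no auxiliary Sz.-Nagy dilation is needed --- the Gaussian variables themselves already provide the unitary dilation of the scalar semigroup. Once you see this choice of $u_t$, everything in your outline goes through verbatim and the ``real work'' you anticipated disappears.
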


\begin{proof}
Here we use the continuous function $\psi \co G \to \R$, the 1-cocycle $b_\psi \co G \to H$ and the orthogonal representation $\pi \co G \to \B(H)$, $s \mapsto \pi_s$ of $G$ on $H$ of Section \ref{sec:preliminaries}. Let $\W \co \L^2_\R(\R,H) \to \L^0(\Omega)$ be an $\L^2_\R(\R,H)$-isonormal process on a probability space $(\Omega,\mu)$, see again Section \ref{sec:preliminaries}. For any $s \in G$, we will use the second quantization $\alpha_s \ov{\mathrm{def}}{=} \Gamma_\infty(\Id \ot \pi_s) \co \L^\infty(\Omega) \to \L^\infty(\Omega)$ which is integral preserving. In particular, if $r,s \in G$ and if $t \in \R$, we have
\begin{equation}
	\label{Action-on-bpsi}
	\alpha_s\big(\e^{-\sqrt{2}\i \W_t(b_\psi(r))}\big)
	=\Gamma_\infty(\Id \ot \pi_s)\big(\e^{-\sqrt{2}\i \W_t(b_\psi(r))}\big)
	\ov{\eqref{SQ1}}{=} \e^{-\sqrt{2}\i \W_t(\pi_s(b_\psi(r)))}.
\end{equation}
Since $\pi$ is strongly continuous, by Lemma \ref{Lemma-semigroup-continuous}, we obtain a continuous action $\alpha \co G \to \Aut(\L^\infty(\Omega))$. So we can consider the crossed product $M \ov{\mathrm{def}}{=} \L^\infty(\Omega) \rtimes_{\alpha} G$ equipped with its canonical normal semifinite faithful weight $\varphi_M \ov{\mathrm{def}}{=}\varphi_\rtimes$. We denote by $J \co \VN(G) \to \L^\infty(\Omega) \rtimes_{\alpha} G$, $\lambda_s \mapsto 1 \rtimes \lambda_s$ the canonical unital normal injective $*$-homomorphism. Using \cite[Proposition 2, III page 35]{Bou1}, for any $f \in \mathcal{K}(G)$, we see that
\begin{equation}
\label{Def-de-J-prem}
J\bigg(\int_G f(s) \lambda_s \d\mu_G(s)\bigg)
=\int_G f(s)1 \rtimes \lambda_s \d\mu_G(s).
\end{equation}

\begin{lemma}
We have $\varphi_\rtimes  \circ J=\varphi_G$.
\end{lemma} 

\begin{proof}
We will use \cite[Theorem 6.2]{Str1} with the weights $\varphi_G$ and $\varphi_\rtimes  \circ J$. Note\footnote{\thefootnote. Recall that $\lambda(f)=\int_{G} f(s) \lambda_s \d \mu_G(s)$.} that $\lambda(\mathcal{K}(G))$ is a $*$-subalgebra which is $\sigma^{\varphi_G}$-invariant, weak* dense in $\VN(G)$ and included in $\mathfrak{n}_{\varphi_G}$. 

For any $f \in \mathcal{K}(G)$, we have $f(\cdot)1 \in \mathcal{K}(G,\L^\infty(\Omega))$. So $J(\lambda(f)) \ov{\eqref{Def-de-J-prem}}{=}\int_G f(s)1 \rtimes \lambda_s \d\mu_G(s)$ belongs to $\mathfrak{n}_{\varphi_\rtimes}$. Hence $\varphi_\rtimes \circ J\big(\lambda(f)^*\lambda(f)\big)=\varphi_\rtimes\big(J(\lambda(f))^*J(\lambda(f))\big)<\infty$. Consequently, we have $\lambda(f) \in \mathfrak{n}_{\varphi_\rtimes \circ J}$. We infer that $\mathfrak{n}_{\varphi_\rtimes \circ J}$ is weak* dense in $\VN(G)$. By \cite[page 19]{Str1}, we conclude that the weight $\varphi_\rtimes \circ J$ is semifinite. It is obvious that $\varphi_\rtimes \circ J$ is normal. For any $t \in \R$, note that
$$
\varphi_\rtimes \circ J \circ \sigma_t^{\varphi_G}
\ov{\eqref{commute-modular-group}}{=}\varphi_\rtimes \circ \sigma_t^{\varphi_\rtimes} \circ J 
=\varphi_\rtimes \circ J.
$$ 
So the weights $\varphi_\rtimes \circ J$ and $\varphi_G$ commutes by \cite[pages 67-68]{Str1}. Now, if $f \in \mathcal{K}(G)$, we have
\begin{align*}
\MoveEqLeft
\varphi_\rtimes \circ J\big(\lambda(f)^*\lambda(f)\big)           
		=\varphi_\rtimes\big(J(\lambda(f))^*J(\lambda(f))\big)   \\
		&\ov{\eqref{Def-de-J-prem}}{=}\varphi_\rtimes\bigg(\bigg(\int_G f(s)1 \rtimes \lambda_s \d\mu_G(s)\bigg)^*\int_G f(s)1 \rtimes \lambda_s \d\mu_G(s)\bigg) \\
		&\ov{\eqref{Plancherel-Non-com}}{=}\int_G \bigg(\int_\Omega |f(s)|^21 \d \mu\bigg)\d\mu_G(s)
		=\int_G |f(s)|^2 \d\mu_G(s) 
		\ov{\eqref{Plancherel-Non-com}}{=} \varphi_G\big(\lambda(f)^*\lambda(f)\big).
\end{align*}  
We conclude with \cite[Theorem 6.2]{Str1} that $\varphi_\rtimes  \circ J=\varphi_G$.
\end{proof}

For any $t \in \R$, we consider the function $u_t \co G \to \mathrm{U}(\L^\infty(\Omega))$, $s \mapsto \e^{-\sqrt{2}\i \W_t(b_\psi(s))}$ where $\W_t(b_\psi(s)) \ov{\mathrm{def}}{=} \W(-1_{[t,0]} \ot b_\psi(s))$ if $t<0$. The map $b_\psi \co G \to H$ is continuous. By the point 1 of Lemma \ref{Lemma-semigroup-continuous}, the map $\L^2_\R(\R,H) \to \L^\infty(\Omega)$, $g \mapsto \e^{\i\W(g)}$ is continuous if $\L^\infty(\Omega)$ is equipped with the weak* topology, hence with the weak operator topology when we consider that $\L^\infty(\Omega)$ acts on $\L^2(\Omega)$. By composition, the function $u_t$ is continuous. Recall that by \cite[Exercice 5.7.5]{KaR1} or \cite[page 41]{Str1} the weak operator topology and the strong operator topology coincide on the unitary group $\mathrm{U}(\L^\infty(\Omega))$. So the function $u_t$ is continuous if $\mathrm{U}(\L^\infty(\Omega))$ is equipped with the strong operator topology. For any $t \in \R$ and any $r,s \in G$, note that  
\begin{align*}
\MoveEqLeft
u_t(sr)
=\e^{-\sqrt{2}\i  \W_t(b_\psi(sr))}
\ov{\eqref{Cocycle-law}}{=}
\e^{-\sqrt{2}\i  \W_t(b_\psi(s))}\e^{-\sqrt{2}\i  \W_t(\pi_s(b_\psi(r)))}\\
&\ov{\eqref{Action-on-bpsi}}{=}u_t(s)\alpha_s\big(\e^{-\sqrt{2}\i \W_t(b_\psi(r))}\big)
=u_t(s)\alpha_s(u_t(r)).            
\end{align*}
Hence \eqref{equation-unitaries} is satisfied. By Proposition \ref{Prop-Takesaki}, for any $t \in \R$, we have a unitary $V_t \co \L^2(G,\L^2(\Omega)) \to \L^2(G,\L^2(\Omega))$, $\xi \mapsto (s \mapsto u_t(s^{-1})\xi(s))$ and a $*$-isomorphism $\mathcal{U}_t \co \L^\infty(\Omega) \rtimes_{\alpha} G \to \L^\infty(\Omega) \rtimes_{\alpha} G$, $x \mapsto V_txV_t^*$ such that
\begin{equation}
\label{Ul-bis}
\mathcal{U}_t(\lambda_s \ot \Id_H)
\ov{\eqref{Ul-def}}{=} \pi\big(\e^{\sqrt{2}\i \W_t(b_\psi(s))}\big)(\lambda_s \ot \Id_H)
\quad \text{and} \quad 
\mathcal{U}_t(\pi(x))
\ov{\eqref{Ul-def}}{=} \pi(x), \quad s \in G, x \in \L^\infty(\Omega).
\end{equation}
and finally for any $f \in \mathcal{K}(G,M)$
\begin{equation}
\label{Def-Ut}
\mathcal{U}_t\bigg(\int_G f_s \rtimes \lambda_s \d\mu_G(s)\bigg)
\ov{\eqref{quoi}}{=} \int_G \e^{\sqrt{2}\i \W_t(b_\psi(s))}f_s  \rtimes \lambda_s \d\mu_G(s).
\end{equation}
\begin{lemma}
Each $*$-automorphism $\mathcal{U}_t$ preserves the weight $\varphi_\rtimes$.
\end{lemma} 

\begin{proof}
We will use \cite[Theorem 6.2]{Str1} with the weights $\varphi_\rtimes$ and $\varphi_\rtimes \circ \mathcal{U}_t$. Note that the space of elements $\int_G f_s \rtimes \lambda_s \d\mu_G(s)$ for $f \in \mathcal{K}(G,\L^\infty(\Omega))$ is a $*$-subalgebra which is $\sigma^{\varphi_\rtimes}$-invariant, weak* dense in $\L^\infty(\Omega) \rtimes_{\alpha} G$ and included in $\mathfrak{n}_{\varphi_\rtimes}$. The formulas \eqref{modular-group-crossed} and \eqref{Def-Ut} show that each $\mathcal{U}_t$ and $\sigma_t^{\varphi_\rtimes}$ commute. So, we have
$$
\varphi_\rtimes \circ \mathcal{U}_t \circ \sigma_t^{\varphi_\rtimes}
=\varphi_\rtimes \circ \sigma_t^{\varphi_\rtimes} \circ U_t
=\varphi_\rtimes \circ \mathcal{U}_t.
$$
So the weights $\varphi_\rtimes \circ \mathcal{U}_t$ and $\varphi_\rtimes$ commutes by \cite[pages 67-68]{Str1}. It is (really) easy to check that the weight $\varphi_\rtimes \circ \mathcal{U}_t$ is normal and semifinite. If $f \in \mathcal{K}(G,\L^\infty(\Omega))$, we have
\begin{align*}
\MoveEqLeft
\varphi_\rtimes  \circ \mathcal{U}_t\bigg(\bigg(\int_G f_s \rtimes \lambda_s \d\mu_G(s)\bigg)^*\bigg(\int_G f_s \rtimes \lambda_s \d\mu_G(s)\bigg)\bigg) \\          
		&=\varphi_\rtimes\Bigg(\bigg(\mathcal{U}_t\bigg(\int_G f_s \rtimes \lambda_s \d\mu_G(s)\bigg)\bigg)^*\mathcal{U}_t\bigg(\int_G f_s \rtimes \lambda_s \d\mu_G(s)\bigg)\Bigg) \\
		&\ov{\eqref{Def-Ut}}{=}\varphi_\rtimes\Bigg(\bigg(\int_G \e^{\sqrt{2}\i \W_t(b_\psi(s))}f_s  \rtimes \lambda_s \d\mu_G(s)\bigg)^*\bigg(\int_G \e^{\sqrt{2}\i \W_t(b_\psi(s))}f_s  \rtimes \lambda_s \d\mu_G(s)\bigg)\Bigg)\\
		&\ov{\eqref{Plancherel-Non-com}}{=} \int_G \int_\Omega\e^{-\sqrt{2}\i \W_t(b_\psi(s))}f_s^*\e^{\sqrt{2}\i \W_t(b_\psi(s))}f_s \d \mu \d\mu_G(s)
		=\int_G \int_\Omega f_s^*f_s\d \mu \d\mu_G(s)\\
		&\ov{\eqref{Plancherel-Non-com}}{=} \varphi_\rtimes\bigg(\bigg(\int_G f_s \rtimes \lambda_s \d\mu_G(s)\bigg)^*\bigg(\int_G f_s \rtimes \lambda_s \d\mu_G(s)\bigg)\bigg).
\end{align*} 
 We conclude with \cite[Theorem 6.2]{Str1} that $\varphi_\rtimes \circ \mathcal{U}_t=\varphi_\rtimes$ for any $t \geq 0$.
\end{proof}
\newpage

For any $t \in \R$, we introduce the right shift $\mathcal{S}_t \co \L^2_\R(\R) \to \L^2_\R(\R)$. If $t,u \geq 0$, we have 
\begin{align*}
\MoveEqLeft
\alpha_s\Gamma_\infty(\mathcal{S}_t \ot \Id_H)(\e^{\sqrt{2}\i  \W_u(h)})            
=\alpha_s\big( \e^{\sqrt{2}\i  \W_{]t,t+u]}(h)} \big) 
=\e^{\sqrt{2}\i  \W_{]t,t+u]}(\pi_s(h))}
\end{align*} 
and
\begin{align*}
\MoveEqLeft
\Gamma_\infty(\mathcal{S}_t \ot \Id_H)\alpha_s(\e^{\sqrt{2}\i  \W_u(h)})            
\ov{\eqref{Action-on-bpsi}}{=} \Gamma_\infty(\mathcal{S}_t \ot \Id_H)\e^{\sqrt{2}\i \W_u(\pi_s(h))}
=\e^{\sqrt{2}\i \W(1_{]t,t+u]} \ot \pi_s(h))}.
\end{align*} 
The remaining cases are left to the reader. Hence by density $\Gamma_\infty(\mathcal{S}_t \ot \Id_H)\alpha_s=\alpha_s \Gamma_\infty(\mathcal{S}_t \ot \Id_H)$ for any $s \in G$ and any $u \geq 0$. 

With Lemma \ref{Lemma-crossed}, we consider the operator $S_t \ov{\mathrm{def}}{=} \Gamma_\infty(\mathcal{S}_t \ot \Id_H) \rtimes \Id_{\VN(G)} \co \L^\infty(\Omega) \times_\alpha G \to \L^\infty(\Omega) \times_\alpha G$. Essentially, by the part 2 of Lemma \ref{Lemma-semigroup-continuous}, $(S_t)_{t \in \R}$ is a weak* continuous semigroup of $*$-automorphisms which preserves the weight. We have
\begin{equation}
\label{def-iso-croise-2}
S_t(\pi(\e^{\i \W(1_{[u,u'[} \ot h )}))
\ov{\eqref{def-iso-croise}}{=} \pi(\e^{\i \W(1_{[u+t,u'+t[} \ot h )})),
\quad 
S_t(\lambda_s \ot \Id_{H})
\ov{\eqref{def-iso-croise}}{=} \lambda_s \ot \Id_{H}
\end{equation}
and
\begin{equation}
\label{crossed-3}
S_t\bigg(\int_G f_s \rtimes \lambda_s \d\mu_G(s) \bigg)
\ov{\eqref{crossed-2}}{=}	\int_G T(f_s) \rtimes \lambda_s \d\mu_G(s).
\end{equation}
For any $t \in \R$, we define the $*$-automorphism $U_t \ov{\mathrm{def}}{=} \mathcal{U}_t S_t$. We will prove that $(U_t)_{t \in \R}$ is a group of operators. 
On the one hand, for any $t,t' \in \R$, we have
\begin{align*}
\MoveEqLeft
U_{t'}U_t\big(\pi(\e^{\i \W(1_{[u,u']} \ot h)}) \big)             
=\mathcal{U}_{t'} S_{t'} \mathcal{U}_t S_{t}\big( \pi(\e^{\i \W(1_{[u,u']} \ot h)})\big) 
\ov{\eqref{def-iso-croise-2}}{=} \mathcal{U}_{t'} S_{t'} \mathcal{U}_t \big( \pi(\e^{\i \W(1_{[u+t,u'+t]} \ot h)})\big) \\
&\ov{\eqref{Ul-bis}}{=} \mathcal{U}_{t'} S_{t'}\big( \pi(\e^{\i \W(1_{[u+t,u'+t]} \ot h)})\big)
\ov{\eqref{def-iso-croise-2}}{=} \mathcal{U}_{t'} \big( \pi(\e^{\i \W(1_{[u+t+t',u'+t+t']} \ot h)})\big) \\
&\ov{\eqref{def-iso-croise-2}}{=} \pi(\e^{\i \W(1_{[u+t+t',u'+t+t']} \ot h)} 
=\mathcal{U}_{t'+t} S_{t'+t}\big(\pi(\e^{\i \W(1_{[u,u']} \ot h)}) \big)
=U_{t'+t}\big(\pi(\e^{\i \W(1_{[u,u']} \ot h)}) \big).
\end{align*}
On the other hand, for any $t,t' \geq 0$, we have
\begin{align*}
\MoveEqLeft
U_{t'}U_t(\lambda_s \ot \Id_H)          
=\mathcal{U}_{t'} S_{t'} \mathcal{U}_t(\lambda_s \ot \Id_H) 
\ov{\eqref{Ul-bis}}{=} \mathcal{U}_{t'} S_{t'}\Big( \pi\big(\e^{\sqrt{2}\i \W_t(b_\psi(s))}\big)(\lambda_s \ot \Id_H)\Big) \\
&\ov{\eqref{def-iso-croise-2}}{=} \mathcal{U}_{t'} \Big(\pi \big(\e^{\sqrt{2}\i \W_{[t',t+t']}(b_\psi(s))}\big)(\lambda_s \ot \Id_H)\Big) \\
&\ov{\eqref{Ul-bis}}{=} \pi \Big(\e^{\sqrt{2}\i \W_{[t',t+t']}(b_\psi(s))} \e^{\sqrt{2}\i \W_{t'}(b_\psi(s))}\Big)(\lambda_s \ot \Id_H)\\
&\ov{\eqref{Ul-bis}}{=} \pi\big(\e^{\sqrt{2}\i \W_{t+t'}(b_\psi(s))}\big)(\lambda_s \ot \Id_H)
=\mathcal{U}_{t+t'}(\lambda_s \ot \Id_H)  \\
&\ov{\eqref{def-iso-croise-2}}{=} \mathcal{U}_{t+t'} S_{t+t'}(\lambda_s \ot \Id_H)
=U_{t+t'}(\lambda_s \ot \Id_H).
\end{align*} 
Similarly for any $t \leq 0$ and any $t' \geq 0$ such that $t' \leq -t$ 
\begin{align*}
\MoveEqLeft
U_{t'}U_t (\lambda_s \ot \Id_H)          
=\mathcal{U}_{t'} S_{t'} \mathcal{U}_t (\lambda_s \ot \Id_H)  \\
&\ov{\eqref{Ul-bis}}{=} \mathcal{U}_{t'} S_{t'}\Big( \pi\big(\e^{\sqrt{2}\i \W(-1_{[t,0]} \ot b_\psi(s))}\big)(\lambda_s \ot \Id_H)\Big) \\
&\ov{\eqref{def-iso-croise-2}}{=} \mathcal{U}_{t'} \Big(\pi\big(\e^{\sqrt{2}\i \W(-1_{[t'+t,t']} \ot b_\psi(s))}\big)(\lambda_s \ot \Id_H) \Big) \\
&\ov{\eqref{Ul-bis}}{=}  \pi\big(\e^{\sqrt{2}\i \W(-1_{[t'+t,t']} \ot b_\psi(s))} \e^{\sqrt{2}\i \W_{t'}(b_\psi(s))} \big)(\lambda_s \ot \Id_H) \\
&= \pi\big(\e^{\sqrt{2}\i \W(-1_{[t'+t,0]} \ot b_\psi(s))}\big)(\lambda_s \ot \Id_H) \quad \text{since } t'+t \leq 0 \leq t' \\
&\ov{\eqref{Ul-bis}}{=} \mathcal{U}_{t+t'}(\lambda_s \ot \Id_H) 
\ov{\eqref{def-iso-croise-2}}{=} \mathcal{U}_{t+t'}S_{t+t'}(\lambda_s \ot \Id_H)
=U_{t+t'}(\lambda_s \ot \Id_H) .
\end{align*} 
The remaining cases are left to the reader. We conclude that $(U_t)_{t \in \R}$ is a weak* continuous group of $*$-automorphisms (consider the preadjoints maps of the $U_t$'s and use \cite[Lemme B.15]{EnN1} to prove the weak* continuity of the group).

With \cite[Theorem 10.1]{Str1}, we introduce the canonical faithful normal weight preserving conditional expectation $\E \co \L^\infty(\Omega) \rtimes_{\alpha} G \to \VN(G)$.
Using \cite[Theorem 7.5]{AcC}, it is entirely left to the reader to check that the conditional expectation is given by
\begin{equation}
\label{Esperance}
\E\bigg(\int_G g_s \rtimes \lambda_s \d\mu_G(s)\bigg)	
\ov{}{=}\int_G \mathrm{E}(g_s) \lambda_s \d\mu_G(s).
\end{equation}
For any $t \geq 0$, we have
\begin{align*}
\MoveEqLeft
\E U_tJ\bigg(\int_G f(s) \lambda_s \d\mu_G(s) \bigg) 
\ov{\eqref{Def-de-J-prem}}{=}\E \mathcal{U}_t S_t\bigg(\int_G f(s)1 \rtimes \lambda(s) \d\mu_G(s)\bigg) \\
&\ov{\eqref{crossed-3}}{=} \E \mathcal{U}_t \bigg(\int_G f(s)1 \rtimes \lambda(s) \d\mu_G(s)\\
&\ov{\eqref{Def-Ut}}{=}\E\bigg(\int_G \e^{\sqrt{2}\i \W_t(b_\psi(s))} f(s) \rtimes \lambda_s \d\mu_G(s)\bigg)\\ 
&\ov{\eqref{Esperance}}{=} \int_G \mathrm{E}\big(\e^{\sqrt{2}\i \W_t(b_\psi(s))} f(s)\big) \lambda_s \d\mu_G(s)
=\int_G \mathrm{E}\big(\e^{\sqrt{2}\i \W_t(b_\psi(s))}\big) f(s)\lambda_s \d\mu_G(s)\\
&\ov{\eqref{Esperance-exponentielle-complexe-2}}{=} \int_G \e^{-\frac{t}{2} 2\norm{b_\psi(s)}_{H}^2} f(s)\lambda_s \d\mu_G(s)
=\int_G\e^{-t\norm{b_\psi(s)}_{H}^2} f(s)\lambda_s \d\mu_G(s)\\
&\ov{\eqref{liens-psi-bpsi}}{=}\int_G \e^{-t\psi(s)} f(s)\lambda_s \d\mu_G(s)
\ov{\eqref{equ-def-Fourier-mult}}{=} T_t\bigg(\int_G f(s) \lambda_s \d\mu_G(s)\bigg).
\end{align*}
Thus by density, for any $t \geq 0$, we obtain \eqref{Tt-dilation}. Now, we prove the last assertions. Note each $\alpha_s \co \L^\infty(\Omega) \to \L^\infty(\Omega)$ is integral-preserving. The first is well-known, e. g. \cite[Corollary 7.11.8]{Ped1}. The second is folklore. The third is \cite[Proposition page 301]{Ana1}. 
\end{proof}

\begin{remark} \normalfont
In the case of the Heat semigroup $(\mathcal{H}_{t})_{t \geq 0}$ on $\L^\infty(\R^n)$, we can take $H=\R^n$, $b_\psi=\Id_{\R^n}$ and the action $\alpha$ is trivial. Here, we does not need a noncommutative von Neumann algebra for the dilation since $M$ is a tensor product of commutative von Neumann algebras. This observation is a complement to \cite[Secton 4]{Arh4}.

\end{remark}


\section{Functional calculus}
\label{sec:application}


We start with a little background on sectoriality and $\H^\infty$ functional calculus. We refer to \cite{Haa}, \cite{KW}, \cite{JMX}, \cite{HvNVW2} and \cite{Arh2} for details and complements. Let $X$ be a Banach space. A closed densely defined linear operator $A \co \dom A \subset X \to X$ is called sectorial of type $\omega$ if its spectrum $\sigma(A)$ is included in the closed sector $\overline{\Sigma_\omega}$ where $\Sigma_\omega \ov{\mathrm{def}}{=} \{z \in \C^*: |\arg z|< \omega\}$, and for any angle $\omega<\theta < \pi$, there is a positive constant $K_\theta$ such that
\begin{equation*}\label{Sector}
 \bnorm{(\lambda-A)^{-1}}_{X\to X}\leq
\frac{K_\theta}{\vert  \lambda \vert},\qquad \lambda
\in\mathbb{C}-\overline{\Sigma_\theta}.
\end{equation*}
If $-A$ is the negative generator of a bounded strongly continuous semigroup on a $X$ then $A$ is sectorial of type $\frac{\pi}{2}$. By \cite[Example 10.1.3]{HvNVW2}, sectorial operators of type $<\frac{\pi}{2}$ coincide with negative generators of bounded analytic semigroups.

For any $0<\theta<\pi$, let $\H^{\infty}(\Sigma_\theta)$ be the algebra of all bounded analytic functions $f \co  \Sigma_\theta\to \C$, equipped with the supremum norm $\norm{f}_{\H^{\infty}(\Sigma_\theta)}=\,\sup\bigl\{\vert f(z)\vert \, :\, z\in \Sigma_\theta\bigr\}$. Let $\H^{\infty}_{0}(\Sigma_\theta)\subset \H^{\infty}(\Sigma_\theta)$ be the subalgebra of bounded analytic functions $f \co \Sigma_\theta\to \C$ for which there exist $s,c>0$ such that $\vert f(z)\vert\leq \frac{c \vert z \vert^s}{(1+\vert z \vert)^{2s}}$ for any $z \in \Sigma_\theta$. 

Given a sectorial operator $A$ of type $0< \omega < \pi$, a bigger angle $\omega<\theta<\pi$, and a function $f\in \H^{\infty}_{0}(\Sigma_\theta)$, one may define a bounded operator $f(A)$ by means of a Cauchy integral (see e.g. \cite[Section 2.3]{Haa} or \cite[Section 9]{KW}). The resulting mapping $\H^{\infty}_{0}(\Sigma_\theta) \to \B(X)$ taking $f$ to $f(A)$ is an algebra homomorphism. By definition, $A$ has a bounded $\H^{\infty}(\Sigma_\theta)$ functional calculus provided that this homomorphism is bounded, that is if there exists a positive constant $C$ such that $\bnorm{f(A)}_{X \to X} \leq C\norm{f}_{\H^{\infty}(\Sigma_\theta)}$ for any $f \in \H^{\infty}_{0}(\Sigma_\theta)$. In the case when $A$ has a dense range, the latter boundedness condition allows a natural extension of $f\mapsto f(A)$ to the full algebra $\H^{\infty}(\Sigma_\theta)$.


Using the connection between the existence of dilations in UMD spaces and $\H^\infty$ functional calculus together with the well-known angle reduction principle of Kalton-Weis relying on R-sectoriality, Theorem \ref{Th-dilation-continuous} allows us to obtain the following result (see \cite[Proposition 3.12]{JMX}, \cite[Proposition 5.8]{JMX} and \cite[Corollary 10.9]{KW}). Here, we confine ourselves to unimodular groups by simplicity. Non-unimodular extensions and vector-valued versions of this result are left to the reader. Other applications will be given in subsequent papers, e.g. \cite{ArK2}. 

\begin{thm}
\label{Th-funct-calculus}
Let $G$ be unimodular locally compact group. Let $(T_t)_{t \geq 0}$ be a weak* continuous semigroup of selfadjoint unital completely positive Fourier multipliers on $\VN(G)$. Suppose $1<p<\infty$. We let $-A_p$ be the generator of the induced strongly continuous semigroup $(T_{t,p})_{t \geq 0}$ on the Banach space $\L^p(\VN(G))$. Then for any $\theta>\pi|\frac{1}{p}-\frac{1}{2}|$, the operator $A_p$ has a completely bounded $\H^{\infty}(\Sigma_\theta)$ functional calculus.
\end{thm}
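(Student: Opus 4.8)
The plan is to deduce Theorem \ref{Th-funct-calculus} from the dilation Theorem \ref{Th-dilation-continuous} by transferring the $\L^\infty$-level dilation to the $\L^p$-level and then invoking the standard machinery connecting dilations in UMD spaces with bounded $\H^\infty$ functional calculus. First I would observe that since $G$ is unimodular, part 2 of Theorem \ref{Th-dilation-continuous} gives that $\varphi_M$ is a normal semifinite faithful \emph{trace}, so the crossed product $M = \L^\infty(\Omega)\rtimes_\alpha G$ is a semifinite von Neumann algebra and the noncommutative $\L^p$-spaces $\L^p(M)$ are unambiguously defined. Because $J$, $\E$ and each $U_t$ are trace-preserving (weight-preserving for $\varphi_M$, and $\varphi_G$ on $\VN(G)$), they all extend to complete contractions $J_p \co \L^p(\VN(G)) \to \L^p(M)$, $\E_p \co \L^p(M) \to \L^p(\VN(G))$, and isometric isomorphisms $U_{t,p} \co \L^p(M) \to \L^p(M)$; moreover $(U_{t,p})_{t\in\R}$ is a strongly continuous group of isometries on $\L^p(M)$ and $J_p$ is an isometric embedding with $\E_p J_p = \Id$. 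Applying the $\L^p$-functoriality to \eqref{Tt-dilation} yields $T_{t,p} = \E_p\, U_{t,p}\, J_p$ for all $t\geq 0$, i.e. the induced semigroup $(T_{t,p})_{t\geq 0}$ on $\L^p(M)$ admits a dilation to a strongly continuous group of isometries on the noncommutative $\L^p$-space $\L^p(M)$, which is a UMD Banach space for $1<p<\infty$.

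Next I would apply the dilation-to-functional-calculus principle. Since $(U_{t,p})_{t\in\R}$ is a bounded strongly continuous group on the UMD space $\L^p(M)$, its generator has a bounded $\H^\infty(\Sigma_\theta)$ functional calculus for every $\theta>\pi/2$ (this is the Boyadzhiev--de Laubenfels / Hieber--Pr\"uss type transference for groups on UMD spaces, as recorded in \cite[Proposition 3.12]{JMX}). The dilation formula $T_{t,p}=\E_p U_{t,p} J_p$ then transfers this to $A_p$: because $\E_p$ and $J_p$ are bounded and $\E_p J_p=\Id$, one obtains that $A_p$ has a bounded $\H^\infty(\Sigma_\theta)$ functional calculus for every $\theta>\pi/2$, with the natural norm estimate $\norm{f(A_p)}\leq \norm{\E_p}\,\norm{J_p}\,\norm{f(\text{generator of }U_{\cdot,p})}$. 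To upgrade this to the optimal angle $\theta>\pi|\frac1p-\frac12|$ I would invoke the Kalton--Weis angle reduction principle: since $(T_{t,p})$ is a completely positive, trace-preserving, selfadjoint semigroup, the associated operators are also completely positive on the trace (in particular positive contractions), hence by Weis's theorem on R-sectoriality of positive contraction semigroups on $\L^p$ --- or rather its noncommutative analogue used in \cite{JMX} --- the operator $A_p$ is R-sectorial of the sharp angle, and combining R-sectoriality at the small angle with a bounded $\H^\infty$ calculus at some angle $<\pi$ gives a bounded $\H^\infty(\Sigma_\theta)$ calculus for all $\theta>\pi|\frac1p-\frac12|$; see \cite[Proposition 5.8]{JMX} and \cite[Corollary 10.9]{KW}.

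Finally, for the \emph{complete} boundedness I would run the same argument at the level of operator spaces: the maps $J_p$, $\E_p$, $U_{t,p}$ are not merely bounded but completely bounded (indeed complete contractions) on the natural operator space structures of the noncommutative $\L^p$-spaces, and the semigroup $(T_{t,p})$ acts on $\L^p(\VN(G))$ as a semigroup of complete contractions. Running the transference and angle-reduction arguments in their operator-space-valued (or matrix-valued) forms --- which is exactly the setting of \cite[Proposition 3.12]{JMX} and \cite[Proposition 5.8]{JMX} --- yields a \emph{completely} bounded $\H^\infty(\Sigma_\theta)$ functional calculus for $A_p$, as claimed. The main obstacle is not conceptual but bookkeeping: one must carefully check that all the trace-preservation hypotheses needed for $\L^p$-extendability of $J$, $\E$, $U_t$ are available (which they are, since for unimodular $G$ the weight $\varphi_M$ is a trace and the maps are weight-preserving by Theorem \ref{Th-dilation-continuous}), and that the noncommutative version of Weis's R-sectoriality result applies to $A_p$; both points are handled in \cite{JMX}, so the proof reduces to citing those results after establishing the $\L^p$-dilation, and it is therefore left to the reader as indicated.
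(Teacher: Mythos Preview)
Your proposal is correct and follows essentially the same approach as the paper: the paper itself does not give a detailed proof but simply indicates that one combines the dilation of Theorem \ref{Th-dilation-continuous} (using that $\varphi_M$ is a trace when $G$ is unimodular) with \cite[Proposition 3.12]{JMX}, \cite[Proposition 5.8]{JMX} and \cite[Corollary 10.9]{KW}, which is exactly the route you outline. Your write-up is a faithful and accurate expansion of that sketch.
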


\vspace{0.3cm}

\textbf{Acknowledgements}.
The author acknowledges support by the grant ANR-18-CE40-0021 (project HASCON) of the French National Research Agency ANR. Finally, I would like to thank the referee for useful remarks.


\small

\vspace{0.5cm}

\vspace{0.2cm}
\footnotesize{
\noindent C\'edric Arhancet\\ 
\noindent13 rue Didier Daurat, 81000 Albi, France\\
URL: \href{http://sites.google.com/site/cedricarhancet}{https://sites.google.com/site/cedricarhancet}\\
cedric.arhancet@protonmail.com\\


\begin{thebibliography}{79}


\bibitem[AcC]{AcC}
L. Accardi and C. Cecchini.
\newblock Conditional expectations in von Neumann algebras and a theorem of Takesaki.
\newblock J. Funct. Anal. 45 (1982), 245--273.








\bibitem[Ana1]{Ana1}
C. Anantharaman-Delaroche.
\newblock Action moyennable d'un groupe localement compact sur une alg\`ebre de von Neumann. (French).
\newblock Math. Scand. 45 (1979), no. 2, 289--304.


\bibitem[Arh1]{Arh1}
C. Arhancet.
\newblock On Matsaev's conjecture for contractions on noncommutative $L^p$-spaces.
\newblock Journal of Operator Theory 69 (2013), no. 2, 387--421.

\bibitem[Arh2]{Arh2}
C. Arhancet.
\newblock Analytic semigroups on vector valued noncommutative $L^p$-spaces.
\newblock Studia Math. 216 (2013), no. 3, 271--290.


\bibitem[Arh4]{Arh4}
C. Arhancet.
\newblock Dilations of semigroups on von Neumann algebras and noncommutative $\L^p$-spaces.
\newblock J. Funct. Anal. 276 (2019), no. 7, 2279--2314.

\bibitem[ALM]{ALM}
C. Arhancet and C. Le Merdy.
\newblock Dilation of Ritt operators on $L^{p}$-spaces.
\newblock  Israel J. Math. 201 (2014), no. 1, 373--414.

\bibitem[AFM]{AFM}
C. Arhancet, S. Fackler and C. Le Merdy.
\newblock Isometric dilations and $H^\infty$ calculus for bounded analytic semigroups and Ritt operators.
\newblock Transactions of the American Mathematical Society 369 (2017), 6899--6933.


\bibitem[ArK1]{ArK1}
C. Arhancet and C. Kriegler.
\newblock Projections, multipliers and decomposable maps on noncommutative $\L^p$-spaces.
\newblock Preprint, arXiv:1707.05591.

\bibitem[ArK2]{ArK2}
C. Arhancet and C. Kriegler.
\newblock Riesz transforms, Hodge-Dirac operators and functional calculus for Fourier multipliers.
\newblock Preprint.





\bibitem[BHV]{BHV}
B. Bekka, P. de la Harpe, Pierre and A. Valette.
\newblock Kazhdan's property (T).
\newblock New Mathematical Monographs, 11. Cambridge University Press, Cambridge, 2008.




\bibitem[Bou1]{Bou1}
N. Bourbaki.
\newblock Integration. I. Chapters 1--6. Translated from the 1959, 1965 and 1967 French originals by Sterling K. Berberian. Elements of Mathematics.
\newblock Springer-Verlag, Berlin (2004).


\bibitem[BrR1]{BrR1}
O. Bratteli and D. W. Robinson.
\newblock Operator algebras and quantum statistical mechanics. 1. $C^*$- and $W^*$-algebras, symmetry groups, decomposition of states. Second edition.
\newblock Texts and Monographs in Physics. Springer-Verlag, New York, 1987.










\bibitem[DCH]{DCH}
J. de Canni\`ere and U. Haagerup.
\newblock Multipliers of the Fourier algebras of some simple Lie groups and their discrete subgroups.
\newblock Amer. J. Math. 107 (1985), no. 2, 455--500. 









\bibitem[Dae1]{Dae1}
A. van Daele.
\newblock Continuous crossed products and type III von Neumann algebras.
\newblock London Mathematical Society Lecture Note Series, 31. Cambridge University Press, Cambridge-New York, 1978. 





\bibitem[EnN1]{EnN1}
K.-J. Engel and R. Nagel.
\newblock One-parameter semigroups for linear evolution equations.
\newblock Graduate Texts in Mathematics, 194. Springer-Verlag, New York, 2000. 

\bibitem[Fen1]{Fen1}
G. Fendler.
\newblock Dilations of one parameter semigroups of positive contractions on $L^p$ spaces.
\newblock Canad. J. Math. 49 (1997), no. 4, 736--748.

\bibitem[Fen2]{Fen2}
G. Fendler.
\newblock On dilations and Transference for Continuous One-Parameter Semigroups of Positive Contractions on $\mathcal{L}^p$-spaces.
\newblock  Ann. Univ. Sarav. Ser. Math. 9 (1998), no. 1.






\bibitem[Haa]{Haa}
M. Haase.
\newblock The functional calculus for sectorial operators.
\newblock Operator Theory: Advances and Applications, 169. Birkh\"auser Verlag (2006).

\bibitem[Haa1]{Haa1}
U. Haagerup.
\newblock An Example of a nonnuclear C*-Algebra, which has the metric approximation property.
\newblock Invent. Math. 50 (1978/79), no. 3, 279--293.

\bibitem[Haa2]{Haa2}
U. Haagerup.
\newblock On the dual weights for crossed products of von Neumann algebras. I. Removing separability conditions.
\newblock Math. Scand. 43 (1978/79), no. 1, 99--118.

\bibitem[Haa3]{Haa3}
U. Haagerup.
\newblock On the dual weights for crossed products of von Neumann algebras. II. Application of operator-valued weights.
\newblock Math. Scand. 43 (1978/79), no. 1, 119--140.


\bibitem[HaM]{HaM}
U. Haagerup and M. Musat.
\newblock Factorization and dilation problems for completely positive maps on von Neumann algebras.
\newblock Comm. Math. Phys. 303 (2011), no. 2, 555--594.





\bibitem[HJX]{HJX}
U. Haagerup, M. Junge and Q. Xu.
\newblock A reduction method for noncommutative $L_p$-spaces and applications.
\newblock Trans. Amer. Math. Soc. 362 (2010), no. 4, 2125--2165.

\bibitem[HvNVW2]{HvNVW2}
T. Hyt\"onen, J. van Neerven, M.~Veraar and L. Weis.
\newblock Analysis on Banach spaces, Volume II: Probabilistic Methods and Operator Theory.
\newblock  Springer, 2018.



\bibitem[Jan1]{Jan1}
S. Janson.
\newblock Gaussian Hilbert spaces.
\newblock Cambridge Tracts in Mathematics, 129. Cambridge University Press, Cambridge, 1997.

\bibitem[JLM]{JLM}
M. Junge, C. Le Merdy.
\newblock Dilations and rigid factorisations on noncommutative
$L^p$-spaces.
\newblock J. Funct. Anal. 249 (2007), 220--252.

\bibitem[JMX]{JMX}
M. Junge, C. Le Merdy and Q. Xu.
\newblock $H^\infty$ functional calculus and square functions on noncommutative $L^p$-spaces.
\newblock Ast\'erisque No. 305 (2006).


\bibitem[JMP2]{JMP2}
M. Junge, T. Mei and J. Parcet.
\newblock Noncommutative Riesz transforms-dimension free bounds and Fourier multipliers.
\newblock J. Eur. Math. Soc. (JEMS) 20 (2018), no. 3, 529--595.

















\bibitem[KaL1]{KaL1}
E. Kaniuth and A. T.-M. Lau.
\newblock Fourier and Fourier-Stieltjes algebras on locally compact groups.
\newblock  Mathematical Surveys and Monographs, 231. American Mathematical Society, Providence, RI, 2018.

\bibitem[KaR1]{KaR1}
R. V. Kadison and J. R. Ringrose.
\newblock Fundamentals of the theory of operator algebras. Vol. I. Elementary theory. Reprint of the 1983 original.
\newblock Graduate Studies in Mathematics, 15. American Mathematical Society, Providence, RI, 1997. 
 












\bibitem[KW]{KW}
P. C. Kunstmann  and L. Weis.
\newblock Maximal $L_p$-regularity for parabolic equations, Fourier multiplier theorems and $H^\infty$-functional calculus.
\newblock Functional analytic methods for evolution equations, Lect. Notes in Math. 1855, 65--311, in Springer (2004).

\bibitem[LeM1]{LeM1}
C. Le Merdy.
\newblock Square functions, bounded analytic semigroups, and applications.
\newblock Perspectives in operator theory, 191--220, Banach Center Publ., 75, Polish Acad. Sci. Inst. Math., Warsaw, 2007.



%







\bibitem[Neer1]{Neer1}
J. v. Neerven.
\newblock Stochastic Evolution Equations.
\newblock ISEM Lecture Notes 2007/08.


\bibitem[NVW1]{NVW1}
J. v. Neerven, M. Veraar and L. Weis.
\newblock Stochastic Integration in Banach Spaces -- a Survey.
\newblock Stochastic Analysis: A Series of Lectures (2015) 297--332.

\bibitem[Nua1]{Nua1}
D. Nualart.
\newblock The Malliavin calculus and related topics. Second edition.
\newblock Springer-Verlag, Berlin, 2006.





\bibitem[Osb1]{Osb1}
M. S. Osborne.
\newblock Locally convex spaces.
\newblock Graduate Texts in Mathematics, 269. Springer, Cham, 2014.

\bibitem[Ped1]{Ped1}
G. K. Pedersen.
\newblock $C^{\ast} $-algebras and their automorphism groups.
\newblock London Mathematical Society Monographs, 14. Academic Press, Inc. [Harcourt Brace Jovanovich, Publishers], London-New York, 1979.










\bibitem[PiX]{PiX}
G. Pisier and Q. Xu.
\newblock Non-commutative $L^p$-spaces.
\newblock 1459--1517 in Handbook of the Geometry of Banach Spaces, Vol. II, edited by W.B. Johnson and J. Lindenstrauss, Elsevier (2003).










\bibitem[Ric]{Ric}
\'E. Ricard.
\newblock A Markov dilation for self-adjoint Schur multipliers.
\newblock Proc. Amer. Math. Soc. 136 (2008), no. 12, 4365--4372.












\bibitem[Str1]{Str1}
S. Stratila.
\newblock Modular theory in operator algebras.
\newblock Translated from the Romanian by the author. Editura Academiei Republicii Socialiste Rom\^ania, Bucharest; Abacus Press, Tunbridge Wells, 1981.


\bibitem[Sun]{Sun}
V. Sunder.
\newblock An invitation to von Neumann algebras.
\newblock Springer-Verlag, New York, 1987.




\bibitem[Tak1]{Tak1}
M. Takesaki.
\newblock Theory of operator algebras. I. Reprint of the first (1979) edition.
\newblock Encyclopaedia of Mathematical Sciences, 124. Operator Algebras and Non-commutative Geometry, 5. Springer-Verlag, Berlin, 2002.

\bibitem[Tak2]{Tak2}
M. Takesaki.
\newblock Theory of operator algebras. II.
\newblock Encyclopaedia of Mathematical Sciences, 125. Operator Algebras and Non-commutative Geometry, 6. Springer-Verlag, Berlin, 2003.

\bibitem[Tak3]{Tak3}
M. Takesaki.
\newblock Duality for crossed products and the structure of von Neumann algebras of type III.
\newblock Acta Math. 131 (1973), 249--310.








 


\end{thebibliography}
\end{document}